\theoremstyle{plain}
\newtheorem{theorem}{Theorem}
\newtheorem{lemma}[theorem]{Lemma}
\theoremstyle{definition}
\newtheorem{definition}[theorem]{Definition}
\numberwithin{equation}{section}
\numberwithin{theorem}{section}
\newcommand\eqn[1]{\eqref{e:#1}}
\newcommand\Theorem[1]{Theorem \ref{t:#1}}
\newcommand\Lemma[1]{Lemma \ref{l:#1}}
  \newcommand{\R}{\mathbb{R}}
\newcommand\rh{\mathcal{H}}
\newcommand\rd{\mathrm{d}}
  \newcommand{\convexpath}[2]{
[   
    create hullnodes/.code={
        \global\edef\namelist{#1}
        \foreach [count=\counter] \nodename in \namelist {
            \global\edef\numberofnodes{\counter}
            \node at (\nodename) [draw=none,name=hullnode\counter] {};
        }
        \node at (hullnode\numberofnodes) [name=hullnode0,draw=none] {};
        \pgfmathtruncatemacro\lastnumber{\numberofnodes+1}
        \node at (hullnode1) [name=hullnode\lastnumber,draw=none] {};
    },
    create hullnodes
]
($(hullnode1)!#2!-90:(hullnode0)$)
\foreach [
    evaluate=\currentnode as \previousnode using \currentnode-1,
    evaluate=\currentnode as \nextnode using \currentnode+1
    ] \currentnode in {1,...,\numberofnodes} {
-- ($(hullnode\currentnode)!#2!-90:(hullnode\previousnode)$)
  let \p1 = ($(hullnode\currentnode)!#2!-90:(hullnode\previousnode) - (hullnode\currentnode)$),
    \n1 = {atan2(\y1,\x1)},
    \p2 = ($(hullnode\currentnode)!#2!90:(hullnode\nextnode) - (hullnode\currentnode)$),
    \n2 = {atan2(\y2,\x2)},
    \n{delta} = {-Mod(\n1-\n2,360)}
  in 
    {arc [start angle=\n1, delta angle=\n{delta}, radius=#2]}
}
-- cycle
}
\renewcommand*{\backref}[1]{}
\renewcommand*{\backrefalt}[4]{%
 \ifcase #1 (Not cited.)%
   \or        (Cited on page~#2.)%
    \else      (Cited on pages~#2.)%
    \fi}
\begin{document}
\allowdisplaybreaks

\title[Bernoulli-type free boundary problem]{On a Bernoulli-type overdetermined free boundary problem}

\author[M. Akman]{Murat Akman}

\address{{\bf Murat Akman}\\
Department of Mathematical Sciences, University of Essex\\
Wivenhoe Park, Colchester, Essex CO4 3SQ, UK
} \email{murat.akman@essex.ac.uk}

\author[A. Banerjee]{Agnid Banerjee}

\address{{\bf Agnid Banerjee}
\\
Tata Institute of Fundamental Research
\\
Center for Applicable Mathematics
\\
Bangalore-560065, India} \email{agnidban@gmail.com}

\author[M. Smit Vega Garcia]{Mariana Smit Vega Garcia}

\address{{\bf Mariana Smit Vega Garcia}
\\
Western Washington University. Department of Mathematics \\
BH230, Bellingham, WA 98225, USA} \email{Mariana.SmitVegaGarcia@wwu.edu}

%\thanks{}

\date{\today}

\subjclass[2010]{35R35, 35A01, 35J92, 35J25, 35J70}

\keywords{Quasilinear elliptic equations \and p-Laplacian, Degenerate elliptic equations, Free boundary problems, Bernoulli-type free boundary problems}

\begin{abstract}
In this article we study a Bernoulli-type free boundary problem and generalize a work of Henrot and Shahgholian in \cite{HS1} to $\mathcal{A}$-harmonic PDEs. These are  quasi-linear elliptic PDEs whose structure  is modelled on the $p$-Laplace equation for a  fixed $1<p<\infty$. In particular,  we show that if $K$ is a bounded convex set satisfying the interior ball condition and $c>0$ is a given constant, then there exists a unique convex domain $\Omega$ with $K\subset \Omega$ and a function  $u$ which is  $\mathcal{A}$-harmonic in $\Omega\setminus K$, has continuous boundary values $1$ on $\partial K$ and $0$ on $\partial\Omega$, such that  $|\nabla u|=c$ on $\partial \Omega$. Moreover, $\partial\Omega$ is $C^{1,\gamma}$ for some $\gamma>0$, and it is smooth provided  $\mathcal{A}$ is smooth in $\mathbb{R}^n \setminus \{0\}$. We also show that the super level sets $\{u>t\}$ are convex for $t\in (0,1)$. 
\end{abstract}

\maketitle

\tableofcontents

\section{Introduction and statement of main results}
\label{s1}

Classical Bernoulli free-boundary problems arise in electrostatics, fluid dynamics, optimal insulation, and electro chemistry. In the case of electrostatics, the task is to design an annular condenser consisting of a prescribed boundary component $\partial E$, and an unknown boundary component $\partial \Omega$ (where $\Omega\subset E$), such that the electric field $\nabla u$ is constant in magnitude on the surface $\partial \Omega$ of the second conductor (see \cite{F,FR} for treatment of this problem and applications). This leads to the existence of the following \textit{interior Bernoulli free-boundary} (will be denoted by {\bf (IBFB)}) problem:
\begin{align}
\label{CBFB}
{\bf (IBFB)} \, \left\{
\begin{array}{ll}
-\Delta u=0& \mbox{in}\, \, E\setminus \overline{\Omega},\\
u=1 & \mbox{on}\, \, \partial E, \\
u=0& \mbox{on}\, \, \partial \Omega, \\
|\nabla u|=a & \mbox{on}\, \, \partial \Omega 
\end{array}
\right.
\end{align}
for some given constant $a>0$. The constraint $ |\nabla u|=a$  is called \textit{Bernoulli's law}.  

Here Bernoulli's law $|\nabla u|=a$ should be understood in the following sense:
\[
\liminf\limits_{y\to x, \, y\in E\setminus \overline{\Omega}} |\nabla u(y)|=\limsup\limits_{y\to x, \, y\in E\setminus \overline{\Omega}}|\nabla u(y)|=a \quad  \mbox{for every}\quad x\in\partial\Omega.
\]
The existence and uniqueness of this problem can be stated in the following manner: \textit{is there a domain $\Omega$ with $\Omega\subset E$ and a potential $u:E\setminus \Omega\to \mathbb{R}$ satisfying \eqref{EBFB}? If so, is the couple $(\Omega,u)$ unique?}

The \textit{exterior Bernoulli free-boundary problem} ({\bf (EBFB)} problem for short) is defined in a similar fashion: \textit{is there a couple $(\Omega,u)$ such that $E\subset \Omega$ and \eqref{EBFB} below holds?}
\begin{align}
\label{EBFB}
{\bf (EBFB)} \, \left\{
\begin{array}{ll}
 -\Delta u=0& \mbox{in}\, \, \Omega\setminus\overline{E},\\
u=1 & \mbox{on}\, \, \partial E, \\
u=0& \mbox{on}\, \, \partial \Omega, \\
|\nabla u|=a & \mbox{on}\, \, \partial \Omega.
\end{array}
\right.
\end{align}

In this paper our main goal is to generalize the work of \cite{HS1} on the {\bf (EBFB)} for $\mathcal{A}$-harmonic PDEs (see the definition below in \eqref{AharmonicPDE}), proving %, under the assumption that $E$ is a bounded convex domain, 
existence and uniqueness of $(\Omega, u)$ and showing that $\partial \Omega$ is $C^{2,\alpha}$ (see Theorem \ref{t:HS1}). %Secondly, we will work with a variation of the {\bf (EBFB)} (without the assumption $|\nabla u|=a$ on $\partial \Omega$).

Regarding the existing literature, the existence of solutions to the {\bf (EBFB)} problem was obtained by Alt and Caffarelli in \cite{AC} by variational methods, and by Beurling \cite{B} using sub-super solution methods in the plane. The reader is also referred to results of Acker in \cite{Ac2,Ac1} concerning the uniqueness and monotonicity of this problem.  

If we assume $E$ to be convex and require $\Omega$ to be convex as well, the question of existence and uniqueness of a pair satisfying \eqref{EBFB} in the plane was answered affirmatively by Tepper in \cite{T}, by Hamilton in \cite{H} both by using conformal mappings, and by Kawohl in \cite{K}, using different methods. In higher dimensions, convexity and uniqueness of  $\Omega$ were shown by Henrot and Shahgholian in \cite{HS}. Under a convexity assumption on $E$, the {\bf (EBFB)} problem was also studied by Henrot and Shahgholian in \cite{HS1}, where they proved existence of a pair $(\Omega, u)$ satisfying \eqref{EBFB} without assuming $E$ to be bounded or regular. When $E$ is bounded, it was shown in \cite{HS1} that the {\bf (EBFB)} problem has a unique solution, and the same result was obtained independently by Acker and Meyer  in \cite{AM}. %See also \cite{HS3}. 

Neither existence nor uniqueness is always true in the {\bf (IBFB)} case. In the plane, existence of a pair $(\Omega, u)$ satisfying \eqref{CBFB} was obtained by Lavrent{\`e}v in \cite{La}, Beurling in \cite{B}, and Daniljuk in \cite{D}. A higher dimensional result was proved by Alt and Caffarelli in \cite{AC}, and under certain assumptions Henrot and Shahgholian proved  in \cite{HS} that the mean curvature of $\partial\Omega$ is positive for any connected component. In \cite{HS}, it was shown that if the  {\bf (IBFB)} problem admits a solution and $E$ is convex, then $\Omega$ is also convex. 

For further discussion of the problems we consider, we shall first introduce the p-Laplace equation:
\[
\Delta_p u=\nabla \cdot \left(|\nabla u|^{p-2} \nabla u\right) = |\nabla u|^{p-4}[|\nabla u|^{2} \Delta u+ (p-2) \sum\limits_{i,j=1}^{n}u_{x_i} u_{x_j} u_{x_i x_j}].
\]
Here $p$ is fixed with $1<p<\infty$, $|\nabla u|=(u^{2}_{x_1} + \ldots + u^{2}_{x_n})^{1/2}$, and $\nabla\cdot $ is the divergence operator.  

It is well-known that, in general, solutions of the p-Laplace equation do not enjoy second order derivatives in the classical sense, therefore solutions to these equations have to be understood as \textit{weak solutions}. That is, given a bounded, connected open set  $\Omega\subset \mathbb{R}^{n}$, $ u $ 
 is a {\itshape $p$-harmonic function} in $\Omega$ provided $u>0$ in $\Omega$  and $u$ is in the \textit{Sobolev space} $ W^ {1,p} ( U ) $ for each open set $ U $ with  $ \bar U \subset \Omega$ and 
	\begin{align}
	\label{pharmonicPDE}
\int_{U} |\nabla u|^{p-2} \langle \nabla u, \nabla \eta \rangle \rd x=0	\quad  \mbox{whenever}\quad \eta \in W^{1, p}_0 ( U ). 
			\end{align}
In the above paragraph $W^{1,p}(U)$ denotes the space of equivalence classes of functions $h$ with \textit{distributional gradient} $\nabla h$ both of which are $p$ integrable in $U$, and $W^{1,p}_{0}$ denotes the closure of $C^{\infty}_{0}$ in the $W^{1,p}$ norm. 

In \cite{HS2}, the $p$-Laplace operator was treated in the {\bf (IBFB)} case:
\[
\left\{
\begin{array}{ll}
\Delta_{p} u=0 & \mbox{in}\, \, E\setminus \Omega,\\
u=1 & \mbox{on}\, \, \partial E,\\
u=0 & \mbox{on}\, \, \partial \Omega,\\
|\nabla u|=a & \mbox{on}\, \, \partial \Omega. 
\end{array}
\right.
\]
Existence of a solution, and regularity (that is, $\partial\Omega$ is $C^{2,\alpha}$) were shown in that article.% This problem comes from a real life application; \textit{power-law flows}. 

The authors of \cite{HS} proved uniqueness and convexity of the {\bf (EBFB)} problem if the Laplace operator is replaced by a general nonlinear operator $\mathcal{L}$ of the form $\mathcal{L}=\mathcal{L}u=\mathcal{F}(x, u, \nabla u, \nabla^{2} u)$, if the operator $\mathcal{L}$ satisfies certain properties (see section 4 in that article). 

In this article we consider the {\bf (EBFB)} problem when the underlying PDE is the so called {\itshape $\mathcal{A}$-harmonic} PDE. We introduce this nonlinear elliptic equation in what follows. 
\begin{definition}  
\label{defn1.1}	
	Let $p,  \alpha \in (1,\infty) $ be fixed and 
	\[
	\mathcal{A}=(\mathcal{A}_1, \ldots, \mathcal{A}_n) \, : \, \mathbb{R}^{n}\setminus \{0\}  \to \mathbb{R}^{n},
\]
   be such that $  
\mathcal{A}= \mathcal{A}(\eta)$  has  continuous  partial derivatives in 
$ \eta_k$ for $k=1,2,\ldots, n$  on $\mathbb{R}^{n}\setminus\{0\}.$ We say that the function $ \mathcal{A}$ belongs to the class
  $ M_p(\alpha)$ if the following two conditions are satisfied whenever  $\xi\in\mathbb R^n$ and
$\eta\in\mathbb R^n\setminus\{0\}$:
	\begin{align*}
		(i)&\, \, \alpha^{-1}|\eta|^{p-2}|\xi|^2\leq \sum_{i,j=1}^n \frac{
		\partial  \mathcal{A}
_{i}}{\partial\eta_j}(\eta)\xi_i\xi_j\leq\alpha |\eta|^{p-2}|\xi|^2,\\
		(ii) &\, \mathcal{A} \text{ is } p-1 \text{ homogeneous, i.e.,} \,  \mathcal{A} (\eta)=|\eta|^{p-1}  \mathcal{A}
(\eta/|\eta|).
	\end{align*}
	\end{definition}
	
We set  $ \mathcal{A}(0) = 0 $  and note that  Definition \ref{defn1.1}  $(i)$ and $(ii) $ imply   
 \begin{align}  
 \label{eqn1.1}
 \begin{split}
 c^{-1}  (|\eta | + |\eta'|)^{p-2} \,   |\eta -\eta'|^2   \leq   \langle  \mathcal{A}(\eta) -& 
\mathcal{A}(\eta'), \eta - \eta' \rangle \\
&\leq c |\eta-\eta'|^2   (|\eta|+|\eta'|)^{p-2}
\end{split}
\end{align}  
whenever $\eta, \eta'   \in  \mathbb{R}^{n} \setminus \{0\}$.    
We will additionally assume that there exists  $ 1\leq  \Lambda <\infty   $   such that  
\begin{align}
\label{nablaAiscont}
\left| \frac{ \partial \mathcal{A} _i }{\partial \eta_j}   ( \eta )  -  \frac{ \partial \mathcal{A} _{i} }{ \partial\eta_j} ( \eta') \right|\leq  \Lambda  \, | \eta  -   \eta' |    |\eta | ^{p-3} 
\end{align}
whenever $ 0 <|\eta|  \leq 2     |\eta' | $ and  $ 1 \leq  i, j \leq n.$
%%% say a word why we need this.

\begin{definition}\label{def Aharm}
Given an  open set  $ \Omega\subset\mathbb{R}^{n}$ and $\mathcal{A}\in M_p(\alpha)$, one says that $ u$ 
 is $  \mathcal{A}
$-harmonic in $\Omega$, and we write $\nabla \cdot \mathcal{A}(\nabla u)=0$, provided $u>0$ in $\Omega$, $ u \in W^ {1,p} ( U) $ for each open set $ U $ with  $ \bar U \subset \Omega$ and
	\begin{align}
	\label{AharmonicPDE}
		\int \langle    \mathcal{A}
(\nabla u(x)), \nabla \eta (x) \rangle \, \rd x = 0\quad  \mbox{whenever}\quad \eta \in W^{1, p}_0 ( U). 
			\end{align}

			\end{definition}
			For more about PDEs of this type the reader is referred to \cite{HKM}. Notice that when $\mathcal{A}(\eta)=\eta$ then \eqref{AharmonicPDE} is the usual Laplace's equation, and when $\mathcal{A}(\eta)=|\eta|^{p-2} \eta$ then \eqref{AharmonicPDE} becomes the p-Laplace equation.

\begin{definition}
\label{d:unisphere}
We say that a set $K$ satisfies \textit{interior ball condition} if
\begin{align}
\label{e:unisphere}
\mbox{for each}\, \, x_0\in \partial D,\, \, \mbox{there is a ball}\, \,  B(z,\delta)\subset D\, \, \mbox{with}\, \, x_0\in\overline{B(z,\delta)}
\end{align}
for some $\delta>0$.
\end{definition}

Our main goal is to generalize the work of Henrot and Shahgholian in \cite{HS1} on the {\bf (EBFB)} problem to $\mathcal{A}$-harmonic PDEs. In particular, we show that

\begin{theorem}
\label{t:HS1}
Let $c>0$ be a given constant and $K$ be a bounded convex domain satisfying the interior ball condition. Then there exist a unique convex domain $\Omega$ with $K\subset\Omega$ and function $u$ satisfying
\begin{align*}
\begin{cases}
& \mbox{$\nabla\cdot \mathcal{A}(\nabla u)=0$ in $\Omega\setminus \bar{K}$.} \\
& \mbox{$u$ has continuous boundary values $1$ on $\partial K$ and $0$ on $\partial \Omega$.}\\
& \mbox{the superlevel sets $\{u>t\}$ are convex for every $t \in (0,1)$.}\\
&\mbox{$|\nabla u|=c$ on $\partial\Omega$}.
\end{cases}
\end{align*}
Moreover, $\partial\Omega\in C^{1,\gamma}$ for some $\gamma>0$. Furthermore we have that $\partial \Omega$ is smooth provided $\mathcal{A}$ is smooth.  
\end{theorem}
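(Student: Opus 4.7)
The plan is to establish existence, convexity of the superlevel sets, uniqueness, and regularity of $\partial\Omega$ in that order, following the overall strategy of Henrot--Shahgholian in \cite{HS1} but replacing classical harmonic tools by their $\mathcal{A}$-harmonic counterparts from the Lewis--Nystr\"om free-boundary framework and the Brunn--Minkowski theory for $\mathcal{A}$-capacity. For \emph{existence}, I would set up an Alt--Caffarelli-type variational problem and minimize
\[
J(v) \;=\; \int F(\nabla v)\,dx \;+\; c^{\,p}\,\lvert\{v>0\}\setminus \bar K\rvert
\]
among admissible $v\in W^{1,p}_{\mathrm{loc}}(\mathbb{R}^n)$ with $v=1$ on $\partial K$ and compact support, where $F$ is the $p$-homogeneous Lagrangian with $\nabla_\eta F=\mathcal{A}$, well defined by the ellipticity and homogeneity in Definition \ref{defn1.1}. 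To build competitors I would first approximate: the $\mathcal{A}$-harmonic function $u_R$ on $B(0,R)\setminus \bar K$ with boundary data $1$ on $\partial K$ and $0$ on $\partial B(0,R)$ exists by \cite{HKM}, and the interior ball condition on $K$ supplies radial $\mathcal{A}$-harmonic barriers that bound $|\nabla u_R|$ uniformly on $\partial K$. Coercivity, lower semicontinuity, and truncation then yield a minimizer $u$; its positivity set $\Omega$ is the candidate domain, and outer/inner variations give $\nabla\cdot\mathcal{A}(\nabla u)=0$ inside $\Omega$ together with the free-boundary condition $|\nabla u|=c$ on $\partial\Omega$ in a weak sense.

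For \emph{convexity} of the superlevel sets, I would invoke a Brunn--Minkowski inequality for $\mathcal{A}$-capacity of convex rings, or equivalently a concavity function argument: for $0<\lambda<1$ and two translates $y_0,y_1$ of a base point, consider
\[
w(x) \;=\; \sup\bigl\{\min\{u(y_0),u(y_1)\} \,:\, x=(1-\lambda)y_0+\lambda y_1\bigr\}
\]
on the Minkowski interpolated ring, and compare $w$ with $u$ through the $\mathcal{A}$-comparison principle. The $(p-1)$-homogeneity of $\mathcal{A}$ together with the ellipticity bounds of Definition \ref{defn1.1} yield $w\le u$, which is equivalent to convexity of $\{u>t\}$ for each $t\in(0,1)$; letting $t\downarrow 0$ gives convexity of $\Omega$ itself.

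For \emph{uniqueness}, suppose $(\Omega_1,u_1)$ and $(\Omega_2,u_2)$ are two solutions. Fix an interior point $x_0\in K$ and dilate $\Omega_2$ about $x_0$ by a factor $\tau\gg 1$ so that the dilated domain contains $\Omega_1$, then decrease $\tau$ to the smallest value for which the inclusion still holds. By the convexity of both domains and the $C^{1,\gamma}$ regularity of their boundaries (established in the next step), the contact is tangential at some $x^\ast$; applying the Hopf-type boundary point lemma for $\mathcal{A}$-harmonic functions to the difference of $u_1$ and the rescaled $u_2$ at $x^\ast$ forces a strict inequality of normal derivatives, contradicting that both equal $c$ unless $\tau=1$ and the two solutions coincide. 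Regularity of $\partial\Omega$ follows from the one-phase free-boundary theory for minimizers of $J$ adapted to the $\mathcal{A}$-harmonic setting: Lipschitz control, non-degeneracy, and flatness at the free boundary yield $\partial\Omega\in C^{1,\gamma}$, and $C^\infty$ is obtained via hodograph transform plus Schauder estimates when $\mathcal{A}\in C^\infty(\mathbb{R}^n\setminus\{0\})$.

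The main obstacle will be the convexity step: whereas for the Laplacian one invokes the strong maximum principle directly on an explicit concavity function, the fully nonlinear $\mathcal{A}$-harmonic setting requires the Brunn--Minkowski comparison to be performed on a free-boundary minimizer rather than on a fixed convex ring, and the structural constants $\alpha$ and $\Lambda$ of Definition \ref{defn1.1} and \eqref{nablaAiscont} enter the interpolation estimates in a delicate way. A secondary technical point is upgrading the Bernoulli condition $|\nabla u|=c$ from the weak/measure-theoretic sense produced by the variational problem to the pointwise sense stated in the theorem, which relies on the regularity step and creates the usual bootstrap loop between free-boundary regularity and the Hopf comparison used for uniqueness.
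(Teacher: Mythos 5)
There is a genuine gap at the very first step. Your existence argument minimizes $J(v)=\int F(\nabla v)\,dx+c^p|\{v>0\}\setminus\bar K|$ with ``$\nabla_\eta F=\mathcal{A}$'', but Definition \ref{defn1.1} does not make the operator variational: the Jacobian $\bigl(\partial\mathcal{A}_i/\partial\eta_j\bigr)$ is only assumed to satisfy the two-sided ellipticity bound and $(p-1)$-homogeneity, and it need not be symmetric, so in general no Lagrangian $F$ with $\nabla_\eta F=\mathcal{A}$ exists and the functional $J$ is simply undefined. (The paper is aware of this: in the proof of Lemma \ref{lemma3.4} it must symmetrize, setting $a_{ij}=\tfrac12[\partial\mathcal{A}_i/\partial\eta_j+\partial\mathcal{A}_j/\partial\eta_i]$.) This is precisely why the paper does not follow Alt--Caffarelli \cite{AC} but instead runs Beurling's sub/supersolution scheme as in \cite{HS1}: it works inside the class of \emph{convex} domains containing $K$, shows the supersolution class $\mathcal{B}$ is stable under intersections and decreasing limits (Lemmas \ref{l:lemma-HS3.1}, \ref{l:lemma-HS3.2}), extracts a minimal element \eqref{minimal}, proves $|\nabla u_\Omega|=c$ at extremal points (Lemma \ref{l:lemma-HS3.4}), and then gets $|\nabla u_\Omega|\geq c$ everywhere from Lemma \ref{l:lemma2.2HSI} applied to the convex level sets. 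Your regularity step inherits the same problem, since you invoke one-phase regularity ``for minimizers of $J$''; the paper instead follows Vogel \cite{V}, which adapts the Alt--Caffarelli/ACF machinery to the Beurling solution without requiring it to be a minimizer.

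Even if $\mathcal{A}$ were a gradient (e.g.\ the $p$-Laplacian), your convexity step does not go through as written. The positivity set of an Alt--Caffarelli minimizer carries no a priori convexity, whereas the sup-min comparison you sketch (the paper's Lemma \ref{lemma3.4}, after Lewis and Gabriel) is a \emph{convex ring} result: it needs both $K$ and $\Omega$ convex, nonvanishing gradient (supplied by the interior ball condition via Lemma \ref{lemma3.3}), and interior $C^{2,\theta}$ regularity, and the inequality $\mathbf{u}\le u$ is \emph{not} a consequence of the comparison principle applied to the sup-min function -- it is obtained through a delicate second-order contradiction argument at an interior maximum of $\mathbf{u}^{1+\epsilon}-u$, using Taylor expansions and a trace inequality for the symmetrized coefficients. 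You flag this as ``the main obstacle'' but offer no mechanism to resolve it; in the paper it is resolved structurally, because every competitor domain in Beurling's classes is convex by definition, so Lemma \ref{lemma3.4} applies directly and then feeds both the existence step (via \eqref{e:lemma-3.5}) and condition (iv) of \eqref{e:HScond}. Your uniqueness sketch (dilation to first tangential contact plus a Hopf-type comparison) is essentially the Lavrent\`ev principle that the paper imports from \cite{HS}, so that portion is aligned in spirit, though the paper's route does not need the circular reliance on free-boundary regularity that you acknowledge.
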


%{\color{red} (I think we need to assume $K$ satisfies the interior ball condition)}. Yes, added. 

%%
%%
The plan of the paper is as follows. In section \ref{s:preplemmas}, we gather  some known results  concerning the regularity of $\mathcal{A}$-harmonic functions that are relevant for our work. We also show that the levels of $u$ are convex if $K$ is convex by adapting an idea of Lewis \cite{L}. In section \ref{s:proofofHS1}, using a method of Beurling (see also \cite{HS1}), we prove the existence result in Theorem \ref{t:HS1}. Uniqueness   in Theorem \ref{t:HS1} will essentially follow from \cite{HS}. Finally the  regularity result in Theorem \ref{t:HS1} is obtained using ideas inspired by the work of Vogel in \cite{V}.

\section{Notation and Preparatory Lemmas}
\label{s:preplemmas}
Let $x=(x_{1}, \ldots, x_{n})$ denote points in $\mathbb{R}^{n}$ and let $\overline{E},$ $\partial E,$ be the closure and boundary of the set $E\subset\mathbb{R}^{n}.$ Let $\langle \cdot, \cdot\rangle$ be the usual inner product in $\mathbb{R}^{n}$ and $|x|^{2}=\langle x, x\rangle.$  Let $d(E,F)$ denote the distance between the sets $E$ and $F$. Let $B(x, r)$ be the open ball centered at $x$ with radius $r > 0$ in $\mathbb{R}^{n}$ and $dx$ denote the Lebesgue $n-$measure in $\mathbb{R}^{n}$. Given $U \subset \R^n$ an open set and $q$ with $1 \leq q \leq \infty$, let $W^{1,q}(U)$
denote equivalence classes of functions $h:\mathbb{R}^{n}\to\mathbb{R}$ with distributional gradient $\nabla h =\langle h_{x_1}, \ldots, h_{x_n}\rangle$, both of which are $q$-integrable in $U$ with Sobolev norm
%------------------------------------------------------

\begin{align*}
\|h\|^{q}_{W^{1,q}(U)}=\int\limits_{U}(|h|^{q}+|\nabla h|^{q})\rd x.
\end{align*}
Let $C^{\infty}_{0}(U)$ be the set of infinitely differentiable functions with compact support in $U$ and let $W^{1,q}_{0}(U)$ be the closure of $C^{\infty}_{0}(U)$ in the norm of $W^{1,q}(U)$.

In the sequel, $c$ will denote a positive constant $\geq 1$ (not necessarily the same at each occurrence), which may depend only on $ p, n, \alpha, \Lambda$ unless otherwise stated. In general, $c(a_1, \dots, a_n)$ denotes a positive constant $\geq 1$ which may depend only on $p, n, \alpha, \Lambda, a_1, \dots, a_n$, which is not necessarily the same at each occurrence. 
By $A\approx B$ we mean that $ A/B $ is bounded above and below by positive constants depending only on $p, n, \alpha, \Lambda$.  Finally, in this section we will  always assume that $ 1 < p < \infty$, and $r>0$. 

We next introduce the notion of the \textit{Hausdorff measure}. To this end, let $\hat{r}_{0}>0$ be given, and let $0 <\delta < \hat r_0 $ be fixed. Let $\mbox{diam}(\cdot)$ denote the diameter of a set and let $E \subseteq \mathbb{R}^{n}$ be a given Borel set. For an arbitrary integer $k>0$, we define the $(\delta, k)-$\textit{Hausdorff content} of $E$ in the usual way:
\begin{align*}
 \rh_{ \delta }^{k}(E):= \inf\left\{\sum\limits_{i} r_{i}^{k}:\, \, E\subset \bigcup_{i} B(x_i, r_i) \, \mbox{with}\, \, r_i<\delta\right\}.
 \end{align*}
Here the infimum is taken over all possible covers $\{B(x_i, r_i)\}$ of $E$. Then the \textit{Hausdorff $k$-measure} of $E$ is defined by
\begin{align}
\label{e:hausdorffmeasure}
\rh^{k}(E):= \lim\limits_{\delta \to 0 } \, \,  \rh_{ \delta }^{k}(E).
 \end{align}

\begin{lemma} 
\label{l:fislegthanu}
Given $p$ with $1<p<\infty$, assume that $\mathcal{A}\in M_{p}(\alpha)$ for some $\alpha>1$. Let $u$ be an $\mathcal{A}$-harmonic function in $B(w,4r)$. Then
\begin{align*}
&(a)\, \, r^{p-n}\int\limits_{B(w,r/2)} |\nabla u|^{p} \rd x \leq c\max\limits_{B(w,r)}u^{p}.\\
&(b)\, \,  \max\limits_{B(w,r)} u \leq c\, \min\limits_{B(w,r)} u.
\end{align*}
Moreover, there exists $\gamma\in (0,1)$ depending on $p,n,\alpha$ such that if $x,y\in B(w,r)$ then
\begin{align*}  %%%\begin{lemma}\label{l:uisholder}
\quad \, \, \, \, \, \, (c)\, \, |u(x)-u(y)|\leq c\, (|x-y|/r)^{\gamma} \max\limits_{B(w, 2r)} u\, .
\end{align*}
\end{lemma}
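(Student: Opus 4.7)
The plan is to treat the three parts as three standard facts about $\mathcal{A}$-harmonic functions and indicate how each is extracted from the structural assumptions on $\mathcal{A}$. All three should in principle be quotable from \cite{HKM}, but it is worth outlining the proofs so that the dependence of constants on $p,n,\alpha,\Lambda$ is transparent. Throughout I would exploit two algebraic consequences of Definition \ref{defn1.1}: the $(p-1)$-homogeneity of $\mathcal{A}$ combined with Euler's relation gives $\langle \mathcal{A}(\eta),\eta\rangle \geq c^{-1}|\eta|^p$ and $|\mathcal{A}(\eta)|\leq c|\eta|^{p-1}$.

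For (a) the natural route is a Caccioppoli-type energy estimate. I would pick a smooth cutoff $\phi\in C_0^\infty(B(w,r))$ with $\phi\equiv 1$ on $B(w,r/2)$ and $|\nabla\phi|\leq c/r$, and test the weak formulation \eqref{AharmonicPDE} against $\eta = u\phi^p$. After distributing the gradient and using the two algebraic bounds above, one gets
\begin{align*}
\int |\nabla u|^p \phi^p\, \rd x \leq c\int |\nabla u|^{p-1}\, u\, |\nabla \phi|\, \phi^{p-1}\, \rd x,
\end{align*}
and Young's inequality with exponents $p/(p-1)$ and $p$ lets one absorb the gradient term on the left, leaving $\int_{B(w,r/2)}|\nabla u|^p \leq c\, r^{-p}\int_{B(w,r)} u^p$. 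Bounding $u$ by its maximum on $B(w,r)$ and using $|B(w,r)|\approx r^n$ produces the claimed inequality.

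For (b) I would invoke the classical Moser/Serrin iteration scheme, which applies essentially verbatim to divergence-form equations $\nabla\cdot \mathcal{A}(\nabla u)=0$ once one has (i), (ii) of Definition \ref{defn1.1}; the argument proceeds by testing against powers $u^q \phi^p$ for suitable exponents $q$, running a Moser iteration for positive $q$ to bound $\sup u$ by an $L^q$-average and for negative $q$ to bound an $L^q$-average of $1/u$ from below, and then bridging the two regimes via a John--Nirenberg/Bombieri--Giusti style lemma. Part (c) is then a formal consequence of (b) and (a): the Harnack inequality applied to the nonnegative $\mathcal{A}$-harmonic functions $M_{2r}-u$ and $u-m_{2r}$ on $B(w,2r)$ (where $M_\rho,\, m_\rho$ denote sup and inf over $B(w,\rho)$) gives the oscillation decay
\begin{align*}
\operatorname{osc}_{B(w,r)} u \leq \theta \operatorname{osc}_{B(w,2r)} u
\end{align*}
for some $\theta=\theta(p,n,\alpha)<1$, and iterating this inequality on dyadic balls produces the Hölder estimate with exponent $\gamma=\log_2(1/\theta)$.

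The only genuinely delicate step is the Harnack inequality in (b); the Caccioppoli estimate is essentially a one-line computation once the homogeneity/ellipticity consequences are in hand, and (c) is a mechanical consequence of (b). In the Moser iteration the main nuisance, as always for quasilinear equations, is handling the case $1<p<2$ (where $|\eta|^{p-2}$ blows up near the origin), but this is exactly the situation for which the structural bound \eqref{eqn1.1} is designed, so no new difficulty arises beyond the by-now-standard computations in \cite{HKM}.
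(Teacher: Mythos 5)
Your outline is correct and coincides with the standard Serrin/Moser argument that the paper itself simply quotes (its ``proof'' of this lemma is the citation to \cite{S}): Caccioppoli via the test function $u\phi^p$ for (a), Moser iteration for the Harnack inequality (b), and dyadic oscillation decay for (c). The only detail worth a word is that in (c) the function $\max_{B(w,2r)}u-u$ is not $\mathcal{A}$-harmonic unless $\mathcal{A}$ is odd, but it solves the equation for $\tilde{\mathcal{A}}(\eta)=-\mathcal{A}(-\eta)\in M_p(\alpha)$, so the Harnack constant is unchanged and your argument goes through.
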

For a proof of Lemma \ref{l:fislegthanu} see \cite{S}. 

\begin{lemma}
\label{l:uisholder}
Given $p$ with $1 < p < \infty$, assume that $\mathcal{A}\in M_{p}(\alpha)$ for some $\alpha>1$. Let $u$ be an $\mathcal{A}$-harmonic function in $B(w,4r)$. Then $u$ has a representative locally in $W^{1,p}(B(w,4r))$ with H{\"o}lder continuous partial derivatives in $B(w,4r)$ (also denoted $u$), and there exist $\beta\in (0,1]$ and $c\geq 1$ depending only on $p,n,\alpha$ such that if $x,y\in B(w,r)$ then
\begin{align}
\label{eqn2.2}
\begin{split}
(i)&\, \,|\nabla u(x)-\nabla u(y)| \leq c (|x-y|/r)^{\beta}\, \max\limits_{B(w,r)}|\nabla u| \leq cr^{-1}(|x-y|/r)^{\beta} u(w). \\
(ii)&\, \, \int_{B(w,r)} \sum\limits_{i,j=1}^{n} |\nabla u|^{p-2} |u_{x_i x_j}|^{2} \rd x \leq c r^{n-p-2}u(w).
\end{split}
\end{align}
Moreover, if 
\[ 
\gamma\, r^{-1} u   \leq |\nabla  u | \leq  \gamma^{-1} r^{-1}   u \quad \mbox{on}\quad B ( w, 2r)  
\] 
for some $ \gamma \in (0,1) $  and  \eqref{nablaAiscont} holds then 
 $u $ has    H\"{o}lder
continuous second  partial derivatives in $B(w,r) $  and
 there exists $\theta \in (0,1), \bar c \geq 1, $ depending only on  the data  and  $ \gamma $  such that if  
  $ x, y \in B (w,  r/2) $,  then   
  \begin{align}  
  \label{eqn2.3}
  \begin{split}
  \Big[ \sum_{i,j = 1}^n  \,   (u_{x_i x_j } (x)   &   -  u_{y_i y_j } (y) ) ^2  \, \Big]^{1/2} \, \leq  \, \bar c  ( | x - y |/ r )^{\theta } \,  \max_{ B(w, r)}\,  \left(\sum_{i,j = 1}^n  \,   |u_{x_i x_j}| \right)     \\ 
  &\leq  \, \bar c^2  r^n  ( | x - y |/ r )^{\theta} \,   \left(\sum_{i,j = 1}^n  \,   \int_{ B ( w, 2r) }   u_{x_i x_j}^2   rd x \right) ^{1/2} \\
  &  \leq  \bar c^3 \,  r^{ - 2} \, ( | x - y |/ r )^{\theta }\, u (w).
\end{split}  
  \end{align}
   \end{lemma}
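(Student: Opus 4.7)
The plan is to invoke the standard interior regularity theory for quasilinear equations of $p$-Laplacian type together with a Caccioppoli estimate for second derivatives, and then in the nondegenerate regime to upgrade to $C^{2,\theta}$ via Schauder.

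For part (i), the conditions of Definition \ref{defn1.1} place $\nabla \cdot \mathcal{A}(\nabla u) = 0$ in the scope of the Ural'tseva--Uhlenbeck--Evans--Tolksdorf--DiBenedetto--Lewis--Lieberman theory, which yields $C^{1,\beta}$ interior regularity for $u$ together with the interior gradient bound $\max_{B(w,r)}|\nabla u|\leq c\, r^{-1}\max_{B(w,2r)}u$. Combining with the Harnack inequality \Lemma{fislegthanu}(b) replaces $\max_{B(w,2r)}u$ by $c\,u(w)$ and delivers (i).

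For part (ii), I would use a Caccioppoli--type estimate for the second derivatives. Since solutions of \eqref{AharmonicPDE} need not be $C^2$ where $\nabla u$ vanishes, one approximates $\mathcal{A}$ by a smooth nondegenerate $\mathcal{A}_\varepsilon$ (for instance by replacing $|\eta|^{p-2}$ with $(\varepsilon^2+|\eta|^2)^{(p-2)/2}$ in the derivative structure) and then passes to the limit. For the smooth approximants $u^\varepsilon$, the function $v:=u^\varepsilon_{x_k}$ satisfies a linear equation with coefficients $a_{ij}:=\partial_{\eta_j}\mathcal{A}^\varepsilon_i(\nabla u^\varepsilon)$; testing with $v\phi^2$ for a standard cutoff $\phi$ with $\phi\equiv 1$ on $B(w,r)$, $\phi\equiv 0$ off $B(w,2r)$, $|\nabla\phi|\leq c/r$, and using the ellipticity in Definition \ref{defn1.1}(i) gives
\[
\int_{B(w,r)} |\nabla u^\varepsilon|^{p-2} |D^2 u^\varepsilon|^2 \, \rd x
\;\leq\; c\, r^{-2}\int_{B(w,2r)} |\nabla u^\varepsilon|^p\, \rd x.
\]
Passing to the limit $\varepsilon\to 0$ and applying \Lemma{fislegthanu}(a) on a slightly larger ball produces (ii). The main technical obstacle lies here: the approximation scheme must be chosen so that $u^\varepsilon \to u$ with enough regularity that the Caccioppoli bound transfers to the limit, and uniform-in-$\varepsilon$ gradient estimates must be tracked carefully.

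For part (iii), the nondegeneracy hypothesis $\gamma r^{-1}u\leq |\nabla u|\leq \gamma^{-1} r^{-1}u$ on $B(w,2r)$, together with \Lemma{fislegthanu}(b), forces $|\nabla u|$ to be comparable to $r^{-1} u(w)$ throughout $B(w,2r)$. Consequently the coefficients $a_{ij}(x):=\partial_{\eta_j}\mathcal{A}_i(\nabla u(x))$ are uniformly elliptic on $B(w,2r)$ with constants depending on $\gamma$, and by \eqref{nablaAiscont} combined with the $C^{0,\beta}$ bound from (i) they are H\"older continuous in $x$. Differentiating the PDE shows that each $u_{x_k}$ solves a linear uniformly elliptic equation with H\"older continuous coefficients, and classical Schauder theory then yields $u\in C^{2,\theta}$ and the first inequality of \eqref{eqn2.3}. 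The middle inequality follows from the standard interior $L^\infty$--$L^2$ estimate (Moser iteration applied componentwise to $D^2 u$), and the last inequality is obtained from (ii) by using the uniform lower bound $|\nabla u|^{p-2}\geq c\,\gamma^{p-2}(r^{-1}u(w))^{p-2}$ to strip the $|\nabla u|^{p-2}$ weight in (ii) and convert it to a pure $L^2$ bound on $D^2 u$. Once (ii) has been established, part (iii) is essentially linear elliptic theory since the nondegeneracy assumption removes the only source of difficulty.
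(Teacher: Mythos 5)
Your proposal is correct and follows essentially the same route as the paper, which simply cites the Tolksdorf-type $C^{1,\beta}$ interior theory (together with the standard weighted Caccioppoli estimate for second derivatives obtained by differentiating a regularized equation) for \eqref{eqn2.2}, and Schauder estimates under the nondegeneracy hypothesis, using \eqref{nablaAiscont} to get H\"older continuous uniformly elliptic coefficients, for \eqref{eqn2.3}. Your additional details (approximation scheme, Harnack via Lemma \ref{l:fislegthanu}, stripping the weight $|\nabla u|^{p-2}$ using the two-sided gradient bounds) are consistent with what the cited references provide.
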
    
   A proof of \eqref{eqn2.2} can be found  in \cite{To}. Estimate \eqref{eqn2.3}  follows from  \eqref{eqn2.2}, the added assumptions  and Schauder type
estimates (see  \cite{GT}).

We will make use of following lemma when we rotate our coordinate system. A proof of it can be found in \cite[Lemma 2.15]{LLN}.

\begin{lemma}
\label{l:rotationissoln}
Let $\Omega\subset\mathbb{R}^{n}$ be a domain and let $p$ with $1<p<\infty$ be given. Let $\mathcal{A}\in M_p(\alpha)$ for some $\alpha>1$ and $u$ be $\mathcal{A}$-harmonic in $\Omega$. If $F:\mathbb{R}^{n}\to \mathbb{R}^{n}$ is the composition of a translation and a dilation, then
\[
\hat{u}(z)=u(F(z)) \, \, \mbox{is $\mathcal{A}$-harmonic in}\, \, F^{-1}(\Omega).
\]
Moreover, if $\tilde{F}:\mathbb{R}^{n}\to \mathbb{R}^{n}$ is the composition of a translation, a dilation, and a rotation then
\[
\tilde{u}(z)=u(F(z)) \, \, \mbox{is $\mathcal{\hat{A}}$-harmonic in}\, \, \tilde{F}^{-1}(\Omega) \quad \mbox{for some} \quad \hat{\mathcal{A}}\in M_p(\alpha).
\]
\end{lemma}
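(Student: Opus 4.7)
The plan is to unwind the two transformations directly in the weak formulation \eqref{AharmonicPDE}, exploiting the $(p-1)$-homogeneity of $\mathcal{A}$ in the pure translation/dilation case and constructing an explicit rotated operator $\hat{\mathcal{A}}$ in the second case. The main obstacle is the second part: one has to check that the natural candidate for $\hat{\mathcal{A}}$ still lies in $M_p(\alpha)$ and that it satisfies the continuity bound \eqref{nablaAiscont}; everything else is a bookkeeping computation with the chain rule and change of variables.

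\textbf{Step 1 (translation and dilation).} Write $F(z)=rz+x_0$ with $r>0$. The chain rule gives $\nabla \hat u(z)=r\,\nabla u(F(z))$. Given $\phi\in W^{1,p}_0(V)$ with $\overline{V}\subset F^{-1}(\Omega)$, set $\psi(x):=\phi(F^{-1}(x))$, which lies in $W^{1,p}_0(F(V))$ with $\nabla\psi(x)=r^{-1}\nabla\phi(z)$. Using $\mathcal{A}(r\eta)=r^{p-1}\mathcal{A}(\eta)$ from Definition~\ref{defn1.1}(ii) and changing variables $x=F(z)$ (Jacobian $r^{n}$), I would compute
\begin{align*}
\int \langle \mathcal{A}(\nabla\hat u(z)),\nabla\phi(z)\rangle\,\rd z
&= r^{p-1}\!\int\langle \mathcal{A}(\nabla u(F(z))),\nabla\phi(z)\rangle\,\rd z\\
&= r^{p-n}\!\int\langle \mathcal{A}(\nabla u(x)),\nabla\psi(x)\rangle\,\rd x\;=\;0,
\end{align*}
where the last equality is \eqref{AharmonicPDE} applied to $u$ with test function $\psi$. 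This gives the first assertion with the same $\mathcal{A}$.

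\textbf{Step 2 (candidate rotated operator).} Now write $\tilde F(z)=rRz+x_0$ with $R\in SO(n)$. The natural candidate is
\[
\hat{\mathcal{A}}(\eta):=R^{T}\mathcal{A}(R\eta),\qquad \eta\in\mathbb{R}^n\setminus\{0\}.
\]
I would verify the three structural properties: (ii) is immediate from $\hat{\mathcal{A}}(\lambda\eta)=R^T\lambda^{p-1}\mathcal{A}(R\eta)=\lambda^{p-1}\hat{\mathcal{A}}(\eta)$; (i) follows since $\partial_{\eta_j}\hat{\mathcal{A}}_i(\eta)=\sum_{k,l}R_{ki}(\partial_{\eta_l}\mathcal{A}_k)(R\eta)R_{lj}$, so
\[
\sum_{i,j}\partial_{\eta_j}\hat{\mathcal{A}}_i(\eta)\,\xi_i\xi_j
=\sum_{k,l}(\partial_{\eta_l}\mathcal{A}_k)(R\eta)\,(R\xi)_k(R\xi)_l,
\]
which is sandwiched between $\alpha^{-1}|R\eta|^{p-2}|R\xi|^2$ and $\alpha|R\eta|^{p-2}|R\xi|^2$, and $|R\eta|=|\eta|$, $|R\xi|=|\xi|$. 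The Hölder bound \eqref{nablaAiscont} transfers analogously because $|R\eta-R\eta'|=|\eta-\eta'|$ and $|R\eta|=|\eta|$.

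\textbf{Step 3 (the weak equation for $\tilde u$).} Computing $\nabla\tilde u(z)=rR^{T}\nabla u(\tilde F(z))$, so $R\nabla\tilde u(z)=r\nabla u(\tilde F(z))$. For $\phi\in W^{1,p}_0(V)$ with $\overline V\subset\tilde F^{-1}(\Omega)$, set $\psi(x):=\phi(\tilde F^{-1}(x))$; the chain rule gives $R\nabla\phi(z)=r\nabla\psi(x)$. Then
\begin{align*}
\int\langle\hat{\mathcal{A}}(\nabla\tilde u(z)),\nabla\phi(z)\rangle\,\rd z
&=\int\langle\mathcal{A}(R\nabla\tilde u(z)),R\nabla\phi(z)\rangle\,\rd z\\
&=r^{p-1}\!\int\langle\mathcal{A}(\nabla u(\tilde F(z))),R\nabla\phi(z)\rangle\,\rd z\\
&=r^{p-n}\!\int\langle\mathcal{A}(\nabla u(x)),\nabla\psi(x)\rangle\,\rd x\;=\;0,
\end{align*}
after the substitution $x=\tilde F(z)$. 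Since this holds for every admissible $\phi$, $\tilde u$ is $\hat{\mathcal{A}}$-harmonic in $\tilde F^{-1}(\Omega)$, completing the proof. The only genuinely delicate point is verifying the $M_p(\alpha)$ membership of $\hat{\mathcal{A}}$ in Step 2; once that is in place, Steps 1 and 3 are essentially the same calculation.
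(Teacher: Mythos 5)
Your proof is correct and is exactly the standard change-of-variables argument in the weak formulation; the paper does not prove this lemma itself but delegates it to \cite[Lemma 2.15]{LLN}, and your computation (including the identification $\hat{\mathcal{A}}(\eta)=R^{T}\mathcal{A}(R\eta)$ and the verification that orthogonality of $R$ preserves the ellipticity bounds with the same $\alpha$) is precisely what that reference supplies. The only cosmetic remarks are that membership in $M_p(\alpha)$ requires only conditions (i) and (ii) of Definition~\ref{defn1.1} — so your transfer of \eqref{nablaAiscont}, where the constant may degrade by a dimensional factor from the double sum over $k,l$, is extra and harmless — and that one should note in passing that $u>0$ and local $W^{1,p}$ membership are trivially preserved under composition with an affine map.
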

In what follows we make observations that will be useful throughout the paper (see also \cite{ALV12,ALV} for a similar computation). Define 
\begin{equation}\label{mathcalL}
\mathcal{L}(\eta,\xi)=\sum\limits_{i,j=1}^{n}\frac{\partial}{\partial x_i}\left[b_{ij}(\eta)\xi_{x_j}\right], \quad \mbox{where} \quad b_{ij}(\eta)=\frac{\partial \mathcal{A}_i}{\partial \eta_j}(\eta).
\end{equation} 
When $\eta=\nabla u$ we will write $\mathcal{L}(\eta,\xi)=\mathcal{L}_u\xi$, and when $\xi=\nabla w$ we will write $\mathcal{L}_u\xi=\mathcal{L}_u w$.
We next show that if $u$ is $\mathcal{A}$-harmonic in $\Omega$, then $\xi=u$ and $\xi=u_{x_k}$ (for $k=1,\ldots, n$) are both weak solutions to $\mathcal{L}_u\xi=0$.

We first see that if $u$ is $\mathcal{A}$-harmonic then 
\[
\mathcal{L}_u u=\sum\limits_{i,j=1}^{n}\frac{\partial}{\partial x_i}\left[\frac{\partial \mathcal{A}_i}{\partial \eta_j}(\nabla u)u_{x_j}\right]=0.
\]
Indeed, using the $(p-1)$-homogeneity of $\mathcal{A}$ in Definition \ref{defn1.1} we obtain
\begin{align*}
\mathcal{L}_u u&=(p-1) \sum\limits_{i=1}^{n} \frac{\partial}{\partial x_i}\mathcal{A}_i(\nabla u) =(p-1) \nabla \cdot \mathcal{A}(\nabla u)=0.
\end{align*}

To show that $\mathcal{L}_u u_{x_k}=0$ for $k=1,\ldots, n$, using Lemma \ref{l:uisholder} we first get $u\in W^{2,2}(\Omega)$. Then it follows that 
\begin{equation}\label{uxk}
\begin{aligned}
\mathcal{L}_u u_{x_k} &= \sum\limits_{i,j=1}^{n}\frac{\partial}{\partial x_i}\left[\frac{\partial \mathcal{A}_i}{\partial \eta_j}(\nabla u) u_{x_k x_j}\right] = \frac{\partial }{\partial x_k} \nabla \cdot \mathcal{A}(\nabla u)=0.
\end{aligned}
\end{equation}
Note that above argument should be understood in the weak sense. Using these two observations and the structural assumptions on $\mathcal{A}$ from Definition \ref{defn1.1} we also conclude that 
\begin{align*}
\mathcal{L}_u(|\nabla u|^{2}) &=\sum\limits_{i,j=1}^{n}\frac{\partial}{\partial x_i}\left[\frac{\partial \mathcal{A}_i}{\partial \eta_j}(\nabla u) (u^{2}_{x_1}+\ldots +u^{2}_{x_n})_{x_j}\right] =2 \sum\limits_{i,j,k=1}^{n}\frac{\partial}{\partial x_i}\left[\frac{\partial \mathcal{A}_i}{\partial \eta_j}(\nabla u) u_{x_k} u_{x_k x_j}\right]\\
&\stackrel{\eqref{uxk}}{=}2  \sum\limits_{i,j,k=1}^{n} \frac{\partial \mathcal{A}_i}{\partial \eta_j}(\nabla u) u_{x_k x_i} u_{x_k x_j}\geq 2 \alpha^{-1} |\nabla u|^{p-2} \sum\limits_{i,j=1}^{n} (u_{x_i x_j})^{2}.
\end{align*}
Using this observation we conclude
\begin{align}
\label{l:nabla u is subsolution}
\mathcal{L}_u(|\nabla u|^{2})\geq c^{-1} |\nabla u|^{p-2}\sum\limits_{i,j=1}^{n} (u_{x_i x_j})^{2}.
\end{align}

%%%% New 
\begin{lemma}  
\label{lemma3.3} Let $\Omega$ be a domain, $K$ be a bounded, closed, convex set with $K\subset\Omega$, $1<p<\infty$ and $\alpha>1$ be given. Let $\mathcal{A}\in M_p(\alpha)$ and $u$ be $\mathcal{A}$-harmonic in $\Omega\setminus K$ with $u=1$ on $\partial K$.  If $K$ satisfies interior ball property then
%{\color{red}(Is this really the condition we are using?)} 
then there  exists $ c_* \geq 1, $ depending only on $ p, n, \alpha, r_0 $ 
such that if  $ x \in \Omega\setminus K$ 
  \begin{align}
\label{eqn3.12}  
\begin{split}
  &      (a)\quad c_*   \langle \nabla u (x),  z-x \rangle
\geq u (x),\\
& (b) \quad
      c_*^{-1}  \, |x|^{\frac{1-n}{p-1} }   \leq  |  \nabla u (x) |   \leq  c_*   \, |x|^{\frac{1-n}{p-1}} \,  .  
      \end{split} 
\end{align} 
\end{lemma}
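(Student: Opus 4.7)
The plan is to produce both bounds via comparison with radial $\mathcal{A}$-harmonic barriers anchored at an interior ball of $K$. Using the $(p-1)$-homogeneity in Definition \ref{defn1.1}(ii) together with the rotational invariance from Lemma \ref{l:rotationissoln}, for any fixed $z \in \mathbb{R}^{n}$ there is an $\mathcal{A}$-harmonic radial function $\Phi_{z}(y)=\phi(|y-z|)$, and writing the equation in polar coordinates reduces it to a first-order ODE whose solution has the explicit form $\phi(r)=c_{1}r^{(p-n)/(p-1)}+c_{2}$ when $p\neq n$ (and a logarithmic profile when $p=n$); in particular $|\phi'(r)|\approx r^{(1-n)/(p-1)}$, and a direct computation yields the key identity
\[
\langle \nabla \Phi_{z}(y),\, z - y \rangle \;=\; -\phi'(|y-z|)\,|y-z| \;\approx\; \phi(|y-z|).
\]
The interior ball condition furnishes distinguished centers $z \in K$ with $B(z, r_{0}) \subset K$, and for each $x$ of interest I would select $z$ so that $|x - z| \approx |x|$.

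For part (b), the upper bound follows by combining $u \leq 1$ (maximum principle in $\Omega\setminus K$) with the interior gradient estimate from Lemma \ref{l:uisholder} applied on $B(x,\rho)$ for $\rho \approx |x-z|$; the rough bound is then sharpened by comparison with $\Phi_{z}$, normalized so that $\Phi_{z} \geq 1$ on $\partial K$ and dominates $u$ on the outer boundary of $\Omega$, giving $|\nabla u(x)| \leq c_{*}|x|^{(1-n)/(p-1)}$. The lower bound is symmetric: a suitable normalization with $\Phi_{z} \leq 1$ on $\partial K$ yields $u \geq c^{-1}\Phi_{z}$ via the comparison principle for $\mathcal{A}$-harmonic functions, and the explicit boundary gradient of $\Phi_{z}$ together with a Hopf-type argument delivers $|\nabla u(x)| \geq c_{*}^{-1}|x|^{(1-n)/(p-1)}$.

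For part (a), the explicit form of $\Phi_{z}$ directly gives the directional inequality $\langle \nabla \Phi_{z}(y), z - y\rangle \geq c^{-1}\Phi_{z}(y)$ pointwise on the annular region. To transfer this to $u$, the sandwich $c^{-1}\Phi_{z} \leq u \leq c\,\Phi_{z}$ together with the local $C^{1,\beta}$ regularity of Lemma \ref{l:uisholder} should force $\nabla u(x)$ to stay quantitatively close to a positive multiple of $\nabla \Phi_{z}(x)$, so that it retains a definite component in the direction $z - x$. I expect this transfer to be the main obstacle: magnitude bounds on $u$ and $\Phi_{z}$ do not by themselves control the direction of $\nabla u$, so one must either invoke a boundary Harnack principle for the linearized operator $\mathcal{L}_{u}$ applied to suitable directional derivatives of $u$ (using the subsolution property \eqref{l:nabla u is subsolution}), or exploit convexity of the super-level sets of $u$ (established elsewhere in the preparatory section) to conclude that $\nabla u(x)$ essentially points from $x$ toward $K$; combining such directional information with the magnitude estimate from (b) would then yield (a).
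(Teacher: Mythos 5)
Your radial-barrier strategy breaks down at its first step. For a general $\mathcal{A}\in M_p(\alpha)$ there is no $\mathcal{A}$-harmonic function of the form $\Phi_z(y)=\phi(|y-z|)$ with the explicit profile $c_1 r^{(p-n)/(p-1)}+c_2$: that formula is special to the $p$-Laplacian. The class $M_p(\alpha)$ only imposes ellipticity and $(p-1)$-homogeneity, so $\mathcal{A}$ may depend anisotropically on the direction $\eta/|\eta|$, and a radial ansatz does not reduce $\nabla\cdot\mathcal{A}(\nabla\Phi_z)=0$ to an ODE. Lemma \ref{l:rotationissoln} does not rescue this: it says a rotated solution is $\hat{\mathcal{A}}$-harmonic for some \emph{other} $\hat{\mathcal{A}}\in M_p(\alpha)$, not that the equation is rotationally invariant. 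What survives of your idea is that powers $|y-z|^{-\beta}$ are sub- or supersolutions only for exponents $\beta$ shifted by the ellipticity constant $\alpha$, and such barriers give mismatched rates; they cannot produce the sharp two-sided bound $|x|^{(1-n)/(p-1)}$ in (b). Getting the sharp exponent requires genuine decay estimates for $\mathcal{A}$-capacitary functions (comparison with the $\mathcal{A}$-capacitary potential of a ball, Harnack chains, and the gradient estimates of Lemmas \ref{l:fislegthanu} and \ref{l:uisholder}), which is exactly the barrier-type machinery of \cite{LLN}, \cite{ALV}, \cite{AGHLV}, \cite{ALSV} that the paper invokes rather than explicit radial solutions.

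The second, and more serious, gap is part (a), which you yourself identify as the main obstacle and then leave unproven. It is the heart of the lemma: once (a) is known, the lower bound in (b) follows from $|\nabla u(x)|\,|z-x|\geq \langle \nabla u(x), z-x\rangle \geq c_*^{-1}u(x)$ together with the lower bound on $u$, so deferring (a) leaves both halves of the statement incomplete. Neither of your proposed repairs works as stated: appealing to convexity of the super-level sets is circular within this paper, because Lemma \ref{lemma3.4} is proved \emph{using} \eqref{eqn3.12} (it is what guarantees $|\nabla u|\neq 0$ and the H\"older continuity of second derivatives via Lemma \ref{l:uisholder}); and the boundary Harnack principle for $\mathcal{L}_u$ applied to directional derivatives is itself a substantial result that you would need to establish for these degenerate operators, not a tool available off the shelf here. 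The paper's own treatment is a citation: the estimate is taken from \cite{AGHLV} ($1<p<n$) and \cite{ALSV} ($n\leq p<\infty$), where (a) is obtained from the interior ball condition by a quantitative comparison/Harnack argument along the segment from $x$ toward the center $z$ of the interior ball, not from trapping $u$ between explicit radial profiles.
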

\begin{proof}  
A proof of this lemma can be found in \cite{AGHLV} when $1<p<n$ and in \cite{ALSV} when $n\leq p<\infty$. Proof uses Lemmas \ref{l:uisholder}, \ref{l:fislegthanu}, and \ref{l:rotationissoln}. Also a barrier type argument has been used as in \cite[Section 2]{LLN} and \cite[Section 4]{ALV}. We skip the details. 
\end{proof}

Now let  $ u $ be a $\mathcal{A}$-harmonic function in $\Omega\setminus K$ where $K\subset\Omega$, it is  continuous on $ \mathbb{R}^{n} $ with  $ u \equiv 1 $  on  $ K$  and $u\equiv 0$ on $\mathbb{R}^{n}\setminus \Omega$. The following lemma establishes the convexity of the superlevel sets $\{u>t\}$  when both $K$ and $\Omega$ are convex. We note that such a result plays a crucial role in the uniqueness assertion in Theorem \ref{t:HS1}. In the case of $p$-Laplacian, such a result was first established  by Lewis in \cite{L} following Gabriel's \cite{G} ideas. Here we adapt the techniques in \cite{L} and \cite{AGHLV} to our situation. We also refer to the interesting paper \cite{CS} for a different proof in the case of the Laplacian.

\begin{lemma}
\label{lemma3.4}
Let $K\subset \Omega$ be such that $K, \Omega$  are convex and let  $u$ be $\mathcal{A}$-harmonic in $\Omega\setminus K$, continuous on $\R^n$ with $u \equiv 1 $  on  $ K$  and $u\equiv 0$ on $\mathbb{R}^{n}\setminus \Omega$. If $K$ satisfies interior ball condition then for each $ t \in (0, 1)$, the set  $\{ x \in \Omega : u ( x )  >  t \} $ is convex.
\end{lemma}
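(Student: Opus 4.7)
The plan is to adapt the concavity function method of Gabriel and Lewis \cite{L,G}, as implemented for $\mathcal{A}$-harmonic equations in \cite{AGHLV}. First I would extend $u$ continuously to $\mathbb{R}^n$ by setting $u\equiv 1$ on $K$ and $u\equiv 0$ outside $\Omega$; convexity of every superlevel set $\{u>t\}$ for $t\in(0,1)$ is then equivalent to the quasi-concavity statement
\[
u(\lambda x+(1-\lambda)y)\,\ge\,\min\bigl(u(x),u(y)\bigr),\qquad x,y\in\mathbb{R}^n,\ \lambda\in[0,1].
\]
Setting
\[
C(x,y)\,:=\,u\!\left(\tfrac{x+y}{2}\right)\,-\,\min\bigl(u(x),u(y)\bigr),
\]
I would argue by contradiction: assume $\mu:=\inf_{\mathbb{R}^n\times\mathbb{R}^n}C=-\delta<0$. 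Since $u$ is continuous with $0\le u\le 1$ and compactly supported in $\overline{\Omega}$, the infimum is attained at some pair $(x_0,y_0)$, and I write $z_0:=(x_0+y_0)/2$.

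The first step is a boundary analysis to rule out that $(x_0,y_0)$ touches $\partial K$ or $\partial\Omega$. If $x_0\in\mathbb{R}^n\setminus\Omega$ then $u(x_0)=0$ and $\min=0\le u(z_0)$, forcing $C(x_0,y_0)\ge 0$, a contradiction. If $x_0,y_0\in K$, convexity of $K$ gives $z_0\in K$ and $C(x_0,y_0)=0$. The delicate case is when exactly one of the endpoints or $z_0$ itself sits on $\partial K$. Here the interior ball condition on $K$, together with the quantitative nondegeneracy $|\nabla u|>0$ on $\Omega\setminus K$ from Lemma \ref{lemma3.3}(b), allows the construction of a local barrier showing $C\ge 0$ in a neighborhood of such configurations, essentially reusing the barrier argument indicated in the proof of Lemma \ref{lemma3.3}. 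This is where the interior ball hypothesis on $K$ is essential. The outcome is that the minimum pair satisfies $(x_0,y_0,z_0)\in(\Omega\setminus K)^3$.

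Next I would analyze the first-order optimality conditions at this interior minimum. If $u(x_0)\ne u(y_0)$, say $u(x_0)<u(y_0)$, then $\min(u(x),u(y))=u(x)$ in a neighborhood of $(x_0,y_0)$, and the vanishing of $\nabla_yC$ forces $\nabla u(z_0)=0$, contradicting the pointwise lower bound on $|\nabla u|$ in Lemma \ref{lemma3.3}(b). Hence $u(x_0)=u(y_0)$. Testing against variations $(x_0+s\xi,y_0\pm s\xi)$ with $\xi\in\mathbb{R}^n$ then yields $\nabla u(x_0)=\nabla u(y_0)=:p_0$ and, after using the $(p-1)$-homogeneity of $\mathcal{A}$ from Definition \ref{defn1.1}(ii), $\nabla u(z_0)$ is proportional to $p_0$. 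The second-order condition on $C$ at its minimum, combined with the uniform ellipticity of the linearized operator $\mathcal{L}_u$ defined in (\ref{mathcalL}) and the $\mathcal{A}$-harmonicity of $u$ at each of $x_0,y_0,z_0$, then yields a strict inequality incompatible with $C(x_0,y_0)=-\delta<0$, producing the contradiction along the lines of \cite{L} and \cite{AGHLV}.

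The main obstacle is the pair of standard difficulties inherent in such concavity-function arguments: the non-smoothness of $\min(u(x),u(y))$ along the diagonal $u(x)=u(y)$, and the possible degeneracy of $\mathcal{L}_u$ when $p\ne 2$. The first issue is handled either by a smooth regularization $\min_\varepsilon$ followed by a limiting argument, or equivalently by working with one-sided subdifferentials at the minimum. The second is overcome by Lemma \ref{lemma3.3}(b), which guarantees $|\nabla u|$ is uniformly bounded away from zero on a neighborhood of the compact set $\{x_0,y_0,z_0\}$, so that $\mathcal{L}_u$ is uniformly elliptic there and the strong maximum principle applies in its standard form.
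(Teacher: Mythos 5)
Your overall strategy (a Gabriel--Lewis quasiconcavity-function argument, reduced to an interior extremal configuration via the nondegeneracy of $\nabla u$ from Lemma \ref{lemma3.3}) is the same family of argument as the paper's, but the core of the proof is either asserted or incorrect. First, the claimed first-order conclusion $\nabla u(x_0)=\nabla u(y_0)$ is wrong for the $\min$-type function: testing $C$ at its minimum with variations $(x_0+s\xi,y_0\pm s\xi)$ only yields that $\nabla u(x_0)$, $\nabla u(y_0)$, $\nabla u(z_0)$ are \emph{positively parallel}; their magnitudes are in general different and satisfy a harmonic-mean relation (this is exactly \eqref{eqn3.28} in the paper, with $\lambda=1/2$ in your midpoint setting). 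This is not a cosmetic point: because the gradients only share a direction, the second-order step cannot be the standard ``equal gradients'' concavity-function computation. One must match points on nearby level sets via a reparametrization (the paper's inverse-function-theorem step producing $\rho_2(\rho_1)$), derive the mixed second-order inequality \eqref{subnewpde}/\eqref{vvu}, and then contract it against a single positive-definite matrix $a_{ij}=\tfrac{\partial\mathcal{A}_i}{\partial\eta_j}(\xi)$ valid at all three points --- which is only possible thanks to the $(p-2)$-homogeneity of $\partial\mathcal{A}_i/\partial\eta_j$, since $|\nabla u|$ differs at $x_0,y_0,z_0$. None of this is addressed by invoking ``uniform ellipticity of $\mathcal{L}_u$'' near the three points.

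Second, and most seriously, your contradiction has no source of strictness. If you run the argument with $u$ itself, then $\mathcal{A}$-harmonicity gives $\sum a_{ij}u_{x_ix_j}=0$ at $x_0$, $y_0$ and $z_0$ (as in \eqref{eqn3.33}), so the trace of the contracted second-order inequality is $0\ge 0$: ellipticity plus ``the strong maximum principle in its standard form'' produces no contradiction with $C(x_0,y_0)=-\delta<0$, because $C$ is a function on $\mathbb{R}^{2n}$ and is not a sub- or supersolution of any single elliptic equation in an obvious way. The paper manufactures strictness by perturbing to $\mathbf{u}^{1+\epsilon}$ versus $u$ in \eqref{eqn3.21}; the power $1+\epsilon$ is what creates the strictly positive term $\alpha^{-1}(1+\epsilon)\epsilon\,u^{\epsilon-1}|\nabla u|^{p}>0$ in \eqref{fvupos} and hence the contradiction \eqref{eqn3.36} versus \eqref{eqn3.32}. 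Your proposal contains no analogue of this perturbation, so as written the argument cannot close. (The boundary reduction via a ``barrier argument reusing Lemma \ref{lemma3.3}'' is also only sketched, and your case analysis omits configurations such as $x_0$ interior to $K$ with $y_0\in\Omega\setminus K$, but these are repairable; the missing strictness mechanism and the incorrect equal-gradient claim are the essential gaps.)
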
     
 \begin{proof} 
We note from \eqref{eqn3.12} and Lemma \ref{l:uisholder} that
\begin{align}
\label{eqn3.20}
\left\{ 
\begin{array}{l}
 |\nabla u | \neq 0,\\ \mbox{$u$  has H\"{o}lder
 continuous second partial derivatives on compact subsets of  $ \Omega.$ }
 \end{array}
 \right.   
 \end{align}
Our proof of Lemma \ref{lemma3.4} is  by  contradiction, following the proof in \cite[section 4]{L}.  We define for $ \hat x \in \mathbb{R}^{n}, $     
\[
\mathbf{u}( \hat x)=\sup\limits_{\mathclap{\substack{\hat y, \hat z\in\mathbb{R}^{n} \\ \hat x=\lambda \hat y+(1-\lambda) \hat z, \lambda\in[0,1]}}} \ \min\{u(\hat y), u(\hat z)\}.
\]                   

Notice that $u\le\mathbf{u}$, $\mathbf{u}\equiv 1$ in $K$ and $\mathbf{u}\equiv 0$ in $\R^n\setminus \Omega$. It suffices to show that $\mathbf{u}=u$. If that were not true,  then from the convexity of  $ K, $  continuity of  $ u, $  and the fact that as $w\rightarrow w_0\in\partial\Omega$, $u(w)\rightarrow 0$, we would conclude that there must exist $ \epsilon > 0, $ and 
$ x_0  \in  \Omega  $  such that   
\begin{align}
\label{eqn3.21}
0 < \mathbf{u}^{1+\epsilon} (x_0) - u(x_0)  =  \max_{ \mathbb{R}^{n}} (\mathbf{u}^{1+\epsilon}  - u ). 
\end{align}
For ease of  writing we write $ \mathbf{v}  =  \mathbf{u}^{1+\epsilon} $  and  
$ v = u^{1+\epsilon}. $  There exist $\lambda \in (0, 1)  $ and $ y_0, z_0 \in \Omega \setminus \{x_0\}  $ with 
\begin{align}
\label{eqn3.22}   
x_0 =  \lambda y_0 + (1-\lambda)  z_0\quad  \mbox{and}\quad  \mathbf{v}(x_0) = \min\{v(y_0), v(z_0) \}.
\end{align}
We first show that   
\begin{align}
\label{eqn3.23}
v (y_0) = v ( z_0).
\end{align}  
Assume for contradiction, for instance, that $ v (y_0) < v (z_0). $ Since $u\le\mathbf{u}$, $u(x_0)\le u(y_0)=\mathbf{u}(x_0)<u(z_0)$. By continuity, if $z$ is in a small enough neighborhood of $z_0$, then $u(z)>u(y_0)+(u(z_0)-u(y_0))/2$. Since $ |\nabla u|\neq 0$ in $ \Omega, $  we can choose  $ y'$ close enough to $ y_0 $ so that $u(y')>u(y_0)$ and also such that after connecting $y'$ and $x_0$ by a line, we can pick a corresponding $z'$ in the previous neighborhood of $z_0$. In this manner $\mathbf{u}(x_0)\ge \min\{u(y'),u(z')\}>u(y_0)=\mathbf{u}(x_0)$, a contradiction. Thus \eqref{eqn3.23} is true.

%with $v(z_0)>v(y')>v(y_0)$. %Drawing the line between $y'$ and $z_0$ and denoting with $x$ the intersection of this line with the line through $x$ in the direction of $y'-y_0$ one sees that $\mathbf{v}(x)\ge\min\{v(y'),v(z_0)\}=v(y')>\mathbf{v}(x_0)$, which is a contradiction.

%$ v(y') > v(y_0)$ 

%and then choose $ z' $  so that  
%$ z'  -  z_0   =   -  (y'- y_0). $  Then by similar triangles we see first  that $ x_0 = \lambda y' + (1-\lambda) z' $ and second   by  construction that   
%\[  
%\min\{v(y'), v(z') \}  > \mathbf{v} (x_0),  \mbox{ a contradiction} .  
%\]
  %{\color{red}(Why is $v(z')\ge v(y')?$)} 
Next we prove that 
\begin{align} 
\label{samedirection}
\xi=\frac{\nabla v(y_0)}{|\nabla v(y_0)|}=\frac{\nabla v(z_0)}{|\nabla v(z_0)|}=\frac{\nabla u(x_0)}{|\nabla u(x_0)|}
\end{align} 
Indeed, let us show that  
\[  
\frac{\nabla v(y_0)}{|\nabla v(y_0)|}=\frac{\nabla v(x_0)}{|\nabla v(x_0)|}.
\] 
Let $y$ not be on the line through $y_0$ and $z_0$ and be such that $\nabla v(y_0)\cdot (y-y_0)>0$. Draw the line through $y$ and $z_0$ and denote by $x$ its intersection with the line originating at $x_0$ with direction $y-y_0$. One has that $v(y)>v(y_0)$ for $y$ close to $y_0$. Therefore $\tilde{v}(x_0)\le \tilde{v}(x)$, for $y$ near $y_0$. From \eqref{eqn3.21} we conclude $u(x)\ge u(x_0)$, for $y$ close to $y_0$, hence $\nabla u(x_0)\cdot (y-y_0)\ge 0$ whenever $\nabla v(y_0)\cdot (y-y_0)>0$, showing that $\nabla u(x_0)$ and $\nabla v(y_0)$ point in the same direction.

%and n
%since  otherwise we could find  $ y', z'  $  as above with  $ v (y' ) > v (y_0), v (z')  > 
%v (z_0 ) . $  As previously, we then get a contradiction to \eqref{eqn3.21}.  
%Finally armed with this knowledge we see that if  \eqref{samedirection} were false,  then   we could choose 
%$  \nu  \in \mathbb{R}^{n},  |\nu |  $  small so that  $ \mathbf{v} $ is increasing at  $ y_0,  z_0 $ in the direction  $ \nu $  while   $ u $ is decreasing at $ x_0 $ in  this direction.  Again we arrive at a contradiction to \eqref{eqn3.21}. Hence  \eqref{samedirection} is valid.      

To simplify our  notation, let
\[
A=|\nabla v(y_0)|,\, \, B=|\nabla v(z_0)|,\, \, C=|\nabla u(x_0)|, \, \, a=|x_0-y_0|, \, \, b=|x_0-z_0|. 
\]
 Let $\eta\in\mathbb{S}^{n-1}$ be such that $\xi\cdot\eta>0$.  From  \eqref{eqn3.20} we can write
\begin{align}
\label{taylorexp}
\begin{split}
v(y_0+\rho\eta)&=v(y_0)+A_1\rho+A_2\rho^{2}+o(\rho^{2}),\\
v(z_0+\rho\eta)&=v(z_0)+B_1\rho+B_2\rho^{2}+o(\rho^{2}),\\
u(x_0+\rho\eta)&=u(x_0)+C_1\rho+C_2\rho^{2}+o(\rho^{2})
\end{split}
\end{align}
as $\rho\to 0$.  Also
\[
A_1/A=B_1/B=C_1/C=\xi\cdot\eta,
\] 
 where  the coefficients and $o(\rho^{2})$ depend on $\eta$. Given $\eta$ with $\xi\cdot\eta>0$ and $\rho_1$ sufficiently small we  see from \eqref{eqn3.20} that the inverse function theorem  can be used to obtain $\rho_2$ with
\[
v\left(y_0+\frac{\rho_1}{A}\eta\right)=v\left(z_0+\frac{\rho_2}{B}\eta\right) .
\]
We  conclude as $\rho_1\to 0$ that
\begin{align}
\label{rho2rho1}
\rho_2=\rho_1+\frac{B}{B_1}\left(\frac{A_2}{A^{2}}-\frac{B_2}{B^{2}}\right) \rho_1^{2}+o(\rho^{2}_1).
\end{align}
Now from geometry we see that $  \lambda  = \frac{b}{a+b} $  so   
\[  x=x_0+\eta\frac{[\rho_1 \frac{b}{A}+\rho_2\frac{a}{B}]}{a+b}
=  \lambda  ( y_0+\frac{\rho_1}{A}\eta ) +  (1-\lambda) (z_0+\frac{\rho_2}{B}\eta).\]
From   this  equality and Taylor's theorem for second derivatives we have 
\begin{align}
\label{taylorforu}
\begin{split}
u(x)-u(x_0)
=& C_1 \, \left[\rho_1\frac{\lambda}{A}+\rho_2\frac{( 1 - \lambda ) }{B} \right]+C_2 \left[\rho_1\frac{\lambda}{A}+\rho_2\frac{(1-\lambda)}{B}\right]^{2}  \\
=& C_1\rho_1 \frac{ ( 1 - \lambda) A + \lambda B}{AB} + C_1 \, \frac{ ( 1 - \lambda ) }{B_1}\left(\frac{A_2}{A^2}-\frac{B_2}{B^2}\right)\rho_1^{2} \\
&+C_2\rho^{2}_1  \left(\frac{(1-\lambda) A+ \lambda B)}{AB} \right)^2               +o(\rho^{2}_1).
\end{split}
\end{align}

Now   
\[
v(y_0+\frac{\rho_1}{A}\eta)-u(x)\leq \mathbf{v}(x)-u(x)\leq \mathbf{v}(x_0)-u(x_0)=v(y_0)-u(x_0).
\]
Hence the mapping 
\[
\rho_1\to v(y_0+\frac{\rho_1}{A}\eta)-u(x)
\]
has a maximum at $\rho_1=0$. Using  the Taylor expansion for $v(y_0+\frac{\rho_1}{A}\eta)$ in \eqref{taylorexp} and $u(x)$ in \eqref{taylorforu} we  have
\begin{align*}
v(y_0+\frac{\rho_1}{A}\eta) -u(x)= &v(y_0)+\frac{A_1}{A}\rho_1+\frac{A_2}{A^{2}}\rho_1^{2}- u(x_0)\\ 
&-C_1\rho_1   \frac{(1 - \lambda) A+\lambda B)}{AB}  - \frac{C_1}{a+b}\frac{a}{B_1}\left(\frac{A_2}{A^2}-\frac{B_2}{B^2}\right)\rho_1^{2}\\
&-C_2\rho^{2}_1       \left(\frac{(1 - \lambda) A+\lambda B)}{AB} \right)^2 +o(\rho^{2}_1)
\end{align*}   Now from the calculus second derivative test,  the  coefficient of $  \rho_1 $  should be zero and the coefficient of  $  \rho_1^2 $  should be non-positive. Hence combining terms we get  
\[  
\frac{A_1}{A}=C_1 \,  \frac{(1 - \lambda) A+\lambda B)}{AB} 
\]   
so taking  $  \eta  = \xi $ we arrive first  at 
\begin{align}
\label{eqn3.28}
\frac{1}{C} \,  = \,  \frac{(1 - \lambda) A+\lambda B)}{AB}    =   \frac{(1 - \lambda)}{B} +  \frac{\lambda}{A} . 
\end{align}
 Second using  \eqref{eqn3.28}  in the  $ \rho_1^2 $  term   we find that 
\begin{align}
\label{eqn3.29}
0\geq \frac{A_2}{A^{2}}  -   C_1  \frac{ (1 - \lambda )}{B_1}\left(\frac{A_2}{A^2}-\frac{B_2}{B^2}\right)-  \frac{C_2}{C^2}\, .
\end{align}
Using  $  C_1/B_1 = C/B $  and doing some arithmetic in \eqref{eqn3.29}  we obtain  
\begin{align}
\label{subnewpde}
0\geq  (1-K)  \frac{A_2}{A^2}+   K \frac{B_2}{B^2}-   \frac{C_2 }{C^{2}}
\end{align}
where
\[
K=\frac{ (1- \lambda)  A}{ ( 1 -  \lambda) A+ \lambda B}<1. 
\]
We now focus on \eqref{subnewpde} by writing $A_1, B_1, C_1$ in terms of derivatives of $u$ and $v$;
\begin{align}
\label{vvu}
0\geq \sum\limits_{i,j=1}^{n} \left[\frac{(1-K)}{A^2} v_{x_i x_j}(y_0)+\frac{K}{B^2}v_{x_i x_j}(z_0)  -\frac{u_{x_i x_j}(x_0)}{C^2} \right]\eta_i \eta_j.
\end{align} 
From symmetry and  continuity  considerations  we  observe that  \eqref{vvu}  holds whenever  $  \eta  \in  \mathbb{S}^{n-1} $  Thus if   
  \[ 
  w (x) = - \frac{(1-K)}{A^2} v (y_0 + x ) - \frac{K}{B^2}v(z_0 + x )\,     + 
\frac{ u_{x_i x_j}(x_0 + x)}{C^2},  
\]  
then the  Hessian matrix of  $ w $ at $ x = 0 $ is positive semi-definite, i.e,   $  ( w_{x_ix_j} (0) ) $  has  non-negative eigenvalues.   Also from  $(i)$ of  Definition \ref{defn1.1} we see that  if 
\[
 a_{ij} = \frac{1}{2}\left[ \frac{\partial \mathcal{A}_i}{\partial \eta_j}  (\xi ) )    
+   \frac{\partial \mathcal{A}_j}{\partial \eta_i}  (\xi ) )   \right],\, \, 1 \leq i, j \leq  n,
\]
then  $  (a_{ij}) $  is  positive definite.  From  these two observations we conclude that 
\begin{align}
\label{eqn3.32}   
\mbox{trace} \left( (  (a_{ij} )  \cdot ( w_{x_ix_j} (0) ) \right)   \geq  0.   
\end{align}   
To obtain a contradiction we observe from \eqref{AharmonicPDE},  the divergence theorem,  \eqref{subnewpde}, and   
  $  p - 2 $  homogeneity of  partial derivatives of  $ \mathcal{A}_i , $      that 
  \begin{align}
  \label{eqn3.33}    
  \sum\limits_{i,j=1}^{n}  a_{ij} \,  u_{x_jx_i} =   | \nabla u |^{2-p}  \sum\limits_{i,j=1}^{n} \frac{\partial \mathcal{A}_i}{\partial \eta_j}(\nabla u ) u_{x_jx_i} \, = \, 0 \mbox{ at } x_0, y_0, z_0. 
\end{align}  
Moreover   from the definition of $v$ we have 
\begin{align}
\label{secondderiv}
\begin{split}
&v_{x_{i}}=(u^{1+\epsilon})_{x_{i}}= (1+\epsilon)u^{\epsilon} u_{x_{i}} \\
&v_{x_{i}x_{j}}= (1+\epsilon)\epsilon u^{\epsilon-1} u_{x_{i}}u_{x_j}+(1+\epsilon)u^{\epsilon} u_{x_{i} x_{j}}.
\end{split}
\end{align}
Using \eqref{eqn3.33}, \eqref{secondderiv}, we find that 
\begin{align}
\label{fvupos}
\begin{split}
   |\nabla u |^{p-2} \, \sum\limits_{i,j=1}^{n}  & 
  a_{ij}  \,  v_{x_j x_i}  = \sum\limits_{i,j=1}^{n}  \frac{\partial \mathcal{A}_i}{\partial \eta_j}(\nabla u)  [(1+\epsilon)u^{\epsilon-1} u_{x_{j}}u_{x_i}+(1+\epsilon)u^{\epsilon} u_{x_{j} x_{i}}]\\
&=(1+\epsilon)\epsilon u^{\epsilon-1}\sum\limits_{i,j=1}^{n}  \frac{\partial \mathcal{A}_i}{\partial \eta_j}(\nabla u)  u_{x_{j}}u_{x_i} + (1+\epsilon)u^{\epsilon}   \,  \sum\limits_{i,j=1}^{n} \frac{\partial \mathcal{A}_i}{\partial \eta_j}(\nabla u )  u_{x_{j}x_i}\\
& \geq \alpha^{-1} (1+\epsilon)\epsilon u^{\epsilon-1}  |\nabla u|^{p-2} |\nabla u|^{2} +0>0
\end{split}
\end{align}
at points $y_0$ and $z_0$ ($\nabla u$ is also evaluated at these points). Using  \eqref{eqn3.33}, \eqref{fvupos}, we conclude that   
\begin{align} 
\label{eqn3.36}
\mbox{trace} \left(  (a_{ij} ) \cdot ( w_{x_ix_j} (0) ) \right)          =    \sum_{i,j=1}^n   a_{ij}   \,  w_{x_ix_j} (0)        < 0.
\end{align}
  \eqref{eqn3.36} and  \eqref{eqn3.32} contradict each other.    The proof of  Lemma \ref{lemma3.4} is now complete. 
\end{proof}  

\section{Proof of \Theorem{HS1}}
\label{s:proofofHS1}
In this section we give a proof of Theorem \ref{t:HS1} by a method of \textit{Beurling}, inspired by Henrot and Shahgholian in \cite{HS1}. To this end, Let $K$ be a convex domain and let $P_{x_{0},a}$ denote the hyperplane in $\mathbb{R}^{n}$ passing through $x_0$ with the normal $a\neq 0$ pointing away from $K$. 

A \textit{supporting hyperplane to $K$ at boundary point $x_{0}$} is a plane satisfying
\[
P_{x,a}:=\{x: \, \, a^{T}x=a^{T}x_{0}\}
\]
where $a\neq 0$ and $a^{T}x\leq a^{T}x_0$ for all $x\in K$. By the supporting hyperplane theorem it is known that there exists a supporting hyperplane at every boundary point of a convex set $K$. Let $\Omega$ be another convex set containing $K$. 

For each $x\in \partial K$ there exists a point $y_{x}\in \partial \Omega\cap \{z:\,\, a\cdot(z-x)>0\}$
satisfying $a\cdot (y_{x}-x)=\max a\cdot (z-x)$, where maximum is taken over the set $\partial \Omega\cap \{z:\,\, a\cdot(z-x)>0\}$. 

%%%%%%%%%%
%%%%%%%%%%
%\begin{center}
%\begin{figure}[!ht]
%\begin{tikzpicture}[scale=.8]
%  \node at (0,0) (a) {};
%  \node at (.5,.5) (b) {};
%  \node at (3,-1) (c) {};
%  \node at (1,-2) (d) {};

%  \draw[red, name path=line1] \convexpath{a,b,c,d}{2cm} node[left] {$\partial\Omega$};
%  \draw[thick,blue, fill=gray, opacity=.5] \convexpath{a,c,d}{1cm};
%\node at (1.5,-.5) {$K$};
%\draw (0.31622, 0.9486) circle (1pt) node[below] {$x_0$};
%\draw[name path=line2, ->] (0.31622, 0.9486)--(0.9486, 2.8460) node[midway, left] {$a$};
%\draw [name intersections={of=line1 and line2, by=x}] (x) circle (1pt)node[right] {$y_{x_{0}}$};
%\node at (2,1) {$\Omega$};
%\begin{scope}
%\draw[red] (-2.8822, 2.045)--(6.6045, -1.2) node[below] {$P_{x_{0},a}$};
%\end{scope}
%\end{tikzpicture}
%\end{figure}
%\end{center}
%%%%%%%%%%
%%%%%%%%%%

%%%%%%%%%%
%%%%%%%%%% Started from here     %%%%%%%%%%
%%%%%%%%%%
%%%%%%%%%%
%%%%%%%%%% Started from here     %%%%%%%%%%
%%%%%%%%%%
%%%%%%%%%%

We will work on convex ring domains. That is, let $D_1$ and $D_2$ be two convex domains satisfying $D_1\subset \overline{D_1}\subset D_2$. We first need an auxiliary lemma.  
\begin{lemma}
\label{l:lemma2.2HSI}
Let $D_1, D_2$ be two convex domains with $D_1\subset \overline{D_1}\subset D_2$. Let $u$ be $\mathcal{A}-$capacitary potential of $D_2\setminus D_1$, that is,
\[
\left\{
\begin{array}{ll}
\nabla\cdot \mathcal{A}(\nabla u)=0 & \mbox{in}\ D_2\setminus\overline{D_1},\\
u=c_1 &\mbox{on}\ \partial D_1,\\
u=c_2 & \mbox{on}\ \partial D_2,
\end{array}
\right.
\]
where $c_1>c_2\geq 0$ are given constants. Then
\[
\limsup\limits_{\substack{z\to x\\z\in D_2\setminus\overline{D_1}}} |\nabla u(z)|\geq \limsup\limits_{\substack{z\to y_x\\ z\in D_2\setminus\overline{D_1}}} |\nabla u(z)|
\]
for all $x\in \partial D_1$. 
\end{lemma}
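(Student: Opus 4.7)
The plan is to use a Beurling-style sliding argument, exploiting the translation invariance of $\nabla\cdot\mathcal{A}(\nabla u)=0$ (the equation only sees $\nabla u$). Set $\tau := y_x - x$. For each $s \in [0,1]$ the translated function $v_s(z) := u(z+s\tau)$ is $\mathcal{A}$-harmonic on $(D_2 - s\tau)\setminus\overline{D_1 - s\tau}$, and the key identity $|\nabla v_s(z)| = |\nabla u(z+s\tau)|$ shows that $\limsup_{z\to x}|\nabla v_1(z)|$ equals $\limsup_{z'\to y_x}|\nabla u(z')|$. Hence the lemma reduces to comparing $|\nabla v_s|$ with $|\nabla u|$ in a suitable one-sided approach to $x$ as $s$ slides from $0$ to $1$.

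The first main step is to construct an open region $W$ with $x\in\partial W$ on which both $u$ and $v_s$ are $\mathcal{A}$-harmonic, and to argue that $v_s\le u$ on $\partial W$. On the portion of $\partial W$ lying on $\partial D_1$ we have $u\equiv c_1\ge v_s$ (by the maximum principle applied to $v_s$ on its own domain), and on the portion lying on $\partial(D_2-s\tau)$ we have $v_s\equiv c_2\le u$. The extremality of $y_x$ as the $a$-maximizer on $\partial D_2$, together with the convexity of $D_1, D_2$, is what ensures that for suitably chosen $W$ the portion of $\partial(D_1-s\tau)$ does not contribute a problematic boundary component near $x$; here the supporting hyperplane to $D_2$ at $y_x$ (which has outer normal $a$, parallel to the supporting hyperplane of $D_1$ at $x$) is used crucially. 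Once $W$ is in place, the weak comparison principle for $\mathcal{A}$-harmonic equations (a consequence of the monotonicity in \eqref{eqn1.1}) yields $v_s\le u$ throughout $W$.

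With the comparison established, I would extract the gradient inequality by examining the asymptotic behavior of $u$ and $v_s$ near $x\in\partial W$. Using the $C^{1,\gamma}$ boundary regularity from \Lemma{uisholder} and the non-degeneracy bound \eqref{eqn3.12} from Lemma \ref{lemma3.3}, together with $u-v_s\ge 0$ in $W$ and the boundary data at hand, one obtains $\limsup_{z\to x,\,z\in W}|\nabla u(z)|\ge \limsup_{z\to x,\,z\in W}|\nabla v_s(z)|$ for each $s\in(0,1)$. Passing to the limit $s\to 1$ and using $|\nabla v_1(z)|=|\nabla u(z+\tau)|$ yields the stated inequality, and since the limsup is monotone under enlarging the set of approaching sequences, the restriction to $W$ can be relaxed to the full $D_2\setminus\overline{D_1}$.

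The hardest part will be the geometric construction of $W$. In simple model cases (for instance, concentric balls with $\tau=(r_2-r_1)a$) the naive intersection $(D_2\setminus\overline{D_1})\cap(D_2-\tau)$ can fail to have $x$ as an accumulation point of its interior, so the comparison region must be chosen more carefully — perhaps as a thin strip carved out between parallel supporting hyperplanes at $x$ and $y_x$, or via the sliding family $\{v_s\}_{s\in[0,1]}$ stopped at the first critical $s^*$ where the comparison first fails. This is precisely the place where the Beurling trick lives: the extremality of $y_x$ with respect to the direction $a$ is exactly what produces the correct region. The nonlinearity of the $\mathcal{A}$-harmonic equation enters only through the weak comparison principle and the estimates of Lemmas \ref{l:fislegthanu}--\ref{lemma3.3}, which are standard at this point of the paper.
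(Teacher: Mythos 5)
Your sliding scheme has a genuine gap at its decisive step: converting the ordering $v_s\le u$ in $W$ into a gradient inequality at $x$. The translate satisfies $v_s(x)=u(x+s\tau)<c_1=u(x)$ for every $s\in(0,1]$ (and $v_1(x)=c_2$), so $u$ and $v_s$ do not touch at $x$; an inequality $u-v_s\ge 0$ in a region whose boundary point $x$ carries a strictly positive gap between the two functions gives no information about $|\nabla u|$ versus $|\nabla v_s|$ there. A Hopf/normal-derivative comparison of gradients requires equality of the compared functions at the contact point (this is exactly how sliding is used in the Lavrent\`ev-principle uniqueness argument of \cite{HS}, where both capacitary potentials vanish at the touching point); without it one can have $u\ge v_s$ near $x$ with $|\nabla u|$ small and $|\nabla v_s|$ large. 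Compounding this, the comparison region $W$ is never constructed: as you yourself note, the naive intersection fails even for concentric balls, and the boundary of any candidate $W$ will in general also contain pieces of $\partial(D_1-s\tau)$ (where $v_s=c_1$ and the needed inequality $u\ge c_1$ is false) or of $\partial D_2$ (where $u=c_2$ and one would need $v_s\le c_2$). Finally, the auxiliary facts you invoke do not apply as stated: Lemma \ref{l:uisholder} is an interior estimate, not boundary $C^{1,\gamma}$ regularity, and Lemma \ref{lemma3.3}/\eqref{eqn3.12} assumes the interior ball condition for the inner set, which is not a hypothesis of Lemma \ref{l:lemma2.2HSI}.

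For contrast, the paper does not slide $u$ at all. After reducing, via the barriers of \cite{Mi}, to the case where $\partial D_1$ is regular at $x$ and $\partial D_2$ is regular at $y_x$ (otherwise $\limsup_{z\to x}|\nabla u|=\infty$, respectively $\limsup_{z\to y_x}|\nabla u|=0$, and the claim is trivial), it takes the supporting hyperplane $P=P_{x,a}=\{x_n=0\}$ of $D_1$ at $x$, works in the cap $D_2'=D_2\cap\{x_n>0\}$, and considers $v=u+\alpha x_n$ with $\alpha=\limsup_{z\to y_x}|\nabla u(z)|-\epsilon$. Since $y_x$ maximizes the $x_n$-coordinate on the relevant part of $\partial D_2$ and $v=u\le c_1$ on $P\cap D_2$, the maximum of $v$ over $\partial D_2'$ must occur at $x$ or $y_x$; the sign of $\partial v/\partial x_n$ at $y_x$ rules out $y_x$, and the same derivative computation at $x$ yields $\limsup_{z\to x}|\nabla u(z)|\ge\alpha$, whence the lemma as $\epsilon\to 0$. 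If you wish to salvage your route, you need a mechanism that forces equality of the compared functions at $x$ (or, as the paper does, compare $u$ with an affine function on the cap), not merely an ordering.
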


\begin{proof}
Without loss of generality assume that $c_1=1$ and $c_2=0$ as we may using the the translation and dilation invariance of \eqref{AharmonicPDE}. Now let $x \in \partial D_1$ and also first assume that $\partial D_1$ is not $C^{1}$ at $x$.  Note that locally near $x$, $u$ can be approximated by functions $u^{\epsilon}$, which are solutions to a uniformly elliptic PDE in non-divergence form  with ellipticity bounds independent of $\epsilon$ (see \cite[section 2.3]{A}). This later fact follows from the structural assumptions on $\mathcal{A}$ as in $(i)$ in  Definition \ref{defn1.1}. Then it follows from  \cite{Mi} that there exists a barrier $v$ to such linear equations  with $v(x)=u^{\epsilon}(x)= u(x)$, $v \leq u^{\epsilon}$ near $x$ and moreover $|\nabla v(x)|= \infty$. Thus it follows that $ |\nabla u(x)|=\infty$. Likewise, if $\partial D_2$ is not $C^{1}$ at $y_x$, then similarly it follows  from \cite{Mi} that there exists   an upper barrier $v$ such that $v(y_x)= u(y_x)$ and $v \geq u$ locally near $y_x$ and such that $|\nabla v(y_x)|=0$. Then it follows that $|\nabla u(y_x)|=0$, which gives the desired result. Thus in view of the above discussion, we may now restrict our attention to the case when $x, y_x$ are in the regular part of $\partial D_1$ and $\partial D_2$ respectively.  Let $x\in \partial D_1$ be fixed and let $y_x$ be the associated point on $\partial D_2$ as described above. Let $P=P_{x,a}$ be a supporting plane at $x$ to $D_1$. 

\begin{center}
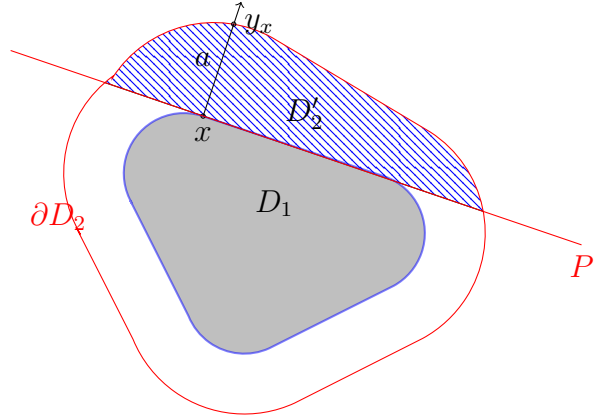
\begin{figure}[!ht]
\begin{tikzpicture}[scale=.8]
\usetikzlibrary{patterns}

  \node at (0,0) (a) {};
  \node at (.5,.5) (b) {};
  \node at (3,-1) (c) {};
  \node at (1,-2) (d) {};
\begin{scope}
\clip \convexpath{a,b,c,d}{2cm};
\draw[pattern=north west lines, pattern color=blue] (-2.8822, 2.0395)--(6.6045, -1.193)--(6.6045, 5)--(-2.8822,5)--(-2.8822, 2.0395);
\end{scope} 
\draw[thick,blue, opacity=.5, fill=gray] \convexpath{a,c,d}{1cm};
\draw[red, name path=line1] \convexpath{a,b,c,d}{2cm} node[left] {$\partial D_2$};

\node at (1.5,-.5) {$D_1$};
\draw (0.31622, 0.9486) circle (1pt) node[below] {$x$};
\draw[name path=line2, ->] (0.31622, 0.9486)--(0.9486, 2.8460) node[midway, left] {$a$};
\draw [name intersections={of=line1 and line2, by=x}] (x) circle (1pt)node[right] {$y_{x}$};
\node at (2,1) {$D'_2$};
\begin{scope}
\draw[red] (-2.8822, 2.0395)--(6.6045, -1.193) node[below] {$P$};

\end{scope}
\end{tikzpicture}
\caption{The supporting place $P=P_{x,a}$ and the domains $D_1$ and $D_2$.}
\end{figure}
\end{center}
\vspace{-1cm}

Note that $D_1\subset \{P<0\}$ and let $D'_2:=D_2\cap \{P>0\}$. By Lemma \ref{l:rotationissoln}, we may assume that $P=\{x_n=0\}$.  Indeed, otherwise after a rotation we have $P=\{x_n=0\}$; we first prove the present lemma for $\tilde{u}$ which is $\tilde{\mathcal{A}}$-harmonic for some $\tilde{\mathcal{A}}\in M_p(\alpha)$ and follow by transferring everything back to $u$. Hence assume $P=\{x_n=0\}$ and  define $v=u+\alpha x_n$, where 
\[
\alpha= \limsup\limits_{\substack{z\to y_x\\ z\in D_2\setminus\overline{D_1}}} |\nabla u(z)|-\epsilon,
\]
with $\epsilon>0$ small. Since $\mathcal{L}u=\mathcal{L}v=0$ in $D'_2$, $v$ attains its maximum on $\partial D'_2$. By the construction of $D'_2$, the maximum of $v$ is either at $x$ or $y_x$. If the maximum were at $y_x$, then
\[
0\leq \limsup\limits_{\substack{z\to y_x\\ z\in D_2\setminus\overline{D_1}}} \frac{\partial v}{\partial x_n}(z)=-\limsup_{\substack{z\to y_x\\ z\in D_2\setminus\overline{D_1}}}|\nabla u(z)|+\alpha=-\epsilon<0.
\]
It follows that $v$ attains its maximum at $x$, hence
\begin{align*}
0&\leq -\limsup_{\substack{z\to x\\ z\in D_2\setminus\overline{D_1}}} \frac{\partial v}{\partial x_n}(z)=\limsup_{\substack{z\to x\\ z\in D_2\setminus\overline{D_1}}}|\nabla u(z)|-\alpha\\
&=\limsup_{\substack{z\to x\\ z\in D_2\setminus\overline{D_1}}}|\nabla u(z)|-\limsup\limits_{\substack{z\to y_x\\ z\in D_2\setminus\overline{D_1}}} |\nabla u(z)|+\epsilon.
\end{align*}
We then conclude the validity of the lemma.
\end{proof}

We next show that if $D_1$ satisfies the so called \textit{the interior ball property} as in \eqref{e:unisphere} then the $\mathcal{A}$-capacitary function $u$ as above has bounded gradient. 

\begin{lemma}
\label{l:lemma2.3HSI}
Let $D_1, D_2$ be as in Lemma \ref{l:lemma2.2HSI} and let $d_0=\min d(\partial D_2, D_1)$. Assume also that $D_1$ satisfies the interior ball property as in \eqref{e:unisphere} with constant $r_0$. Then there is a constant $M=M(d_0, r_0, n)$ such that
\[
|\nabla u|\leq M\, \, \mbox{in} \, \,  D_2\setminus \overline{D_1}. 
\]
\end{lemma}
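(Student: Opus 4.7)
The strategy is to reduce the global gradient bound to a uniform boundary gradient bound on $\partial D_1$, and then establish that bound via an explicit $\mathcal{A}$-harmonic barrier built from the interior ball condition. First, by computation \eqref{l:nabla u is subsolution}, $|\nabla u|^2$ is a weak subsolution of the linear operator $\mathcal{L}_u$ in $D_2\setminus\overline{D_1}$. Lemmas \ref{lemma3.3}(b) and \ref{l:uisholder} ensure that $|\nabla u|$ is locally H\"older continuous and strictly positive, so $\mathcal{L}_u$ is locally uniformly elliptic with H\"older coefficients. Applying the weak maximum principle on exhausting subdomains $V\Subset D_2\setminus\overline{D_1}$ (on which $|\nabla u|$ is a priori bounded by \Lemma{l:uisholder}) and letting $V$ exhaust gives
\[
\sup_{D_2\setminus\overline{D_1}} |\nabla u| \le \max\Bigl(\sup_{x\in\partial D_1}\limsup_{z\to x}|\nabla u(z)|,\; \sup_{w\in\partial D_2}\limsup_{z\to w}|\nabla u(z)|\Bigr).
\]
By Lemma \ref{l:lemma2.2HSI}, the sup on $\partial D_2$ is dominated by the sup on $\partial D_1$: for any $w\in\partial D_2$, choose $a$ to be an outward normal to $D_2$ at $w$, and take $x\in\overline{D_1}$ maximizing $a\cdot z$; then $a$ is a supporting normal to $D_1$ at $x$ and $y_x=w$. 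Hence it suffices to bound $\limsup_{z\to x}|\nabla u(z)|$ uniformly for $x\in\partial D_1$.

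For the barrier, fix $x\in\partial D_1$ and use the interior ball condition to choose $z$ with $B(z,r_0)\subset D_1$ and $x\in\partial B(z,r_0)$. Since $d(\partial D_2,D_1)\ge d_0$ and $B(z,r_0)\subset D_1$, any $y$ with $|y-z|<r_0+d_0$ satisfies $d(y,D_1)<d_0$ and therefore lies in $D_2$; thus $B(z,r_0+d_0)\subset D_2$. Let $\phi$ be the $\mathcal{A}$-capacitary potential of $B(z,r_0)$ in $B(z,r_0+d_0)$, extended by $1$ on $\overline{B(z,r_0)}$. On the open set $\Omega_0:=B(z,r_0+d_0)\setminus\overline{D_1}$, which is nonempty (it contains the segment $\{x+t\nu:0<t<d_0\}$ with $\nu=(x-z)/r_0$), both $u$ and $\phi$ are $\mathcal{A}$-harmonic. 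On $\partial D_1\cap\overline{B(z,r_0+d_0)}$ we have $u=1\ge\phi$ (since $0\le\phi\le 1$ by the max principle on the annulus), and on $\partial B(z,r_0+d_0)\setminus\overline{D_1}$ we have $\phi=0\le u$. The comparison principle for $\mathcal{A}$-harmonic functions then yields $u\ge\phi$ on $\Omega_0$.

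Since $B(z,r_0)\subset D_1$ touches $\partial D_1$ at $x$ and $D_1$ is convex, the only supporting hyperplane of $D_1$ at $x$ is orthogonal to $\nu$; consequently $\partial D_1$ is $C^1$ at $x$ and the identity $u\equiv 1$ on $\partial D_1$ forces $\nabla u(x)\parallel\nu$, and the same holds for $\nabla\phi(x)$. Expanding $u$ and $\phi$ to first order along $\nu$ from $x$ into $\Omega_0$ and using $u\ge\phi$, $u(x)=\phi(x)=1$, gives $|\nabla u(x)|\le|\nabla\phi(x)|$. After translating $z$ to the origin and invoking \Lemma{l:rotationissoln} to reduce rotations to the class $M_p(\alpha)$, $\phi$ is an $\widehat{\mathcal{A}}$-capacitary potential on a fixed annulus with boundary data $1,0$, and standard boundary $C^{1,\beta}$-regularity for $\mathcal{A}$-harmonic functions on smooth domains produces a uniform bound $|\nabla\phi(x)|\le M(n,p,\alpha,\Lambda,r_0,d_0)$, independent of $x$. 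The main technical obstacle is this last boundary regularity step, since the excerpt only records interior $C^{1,\beta}$ estimates in \Lemma{l:uisholder}; one would invoke boundary Schauder-type estimates (e.g.\ Lieberman) together with a rescaling/compactness argument to guarantee that $M$ depends only on the listed data.
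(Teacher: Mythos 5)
Your overall skeleton is the paper's: reduce to boundary gradient bounds via the subsolution property \eqref{l:nabla u is subsolution} and the maximum principle for $\mathcal{L}_u$, then handle $\partial D_1$ with a barrier built from the interior ball of radius $r_0$ (this is exactly what the paper means by ``construct a barrier as in the proof of Lemma \ref{lemma3.3}''). Your treatment of $\partial D_2$ is a genuinely different and acceptable shortcut: the paper simply traps $u$ below the linear function $x_n/d$ between a supporting hyperplane of $D_2$ at the point and a parallel supporting hyperplane of $D_1$, getting $|\nabla u|\le 1/d_0$ directly, whereas you transfer the bound from $\partial D_1$ via Lemma \ref{l:lemma2.2HSI}; your choice of $x$ and $a$ does make $w$ an admissible $y_x$ (note $a\cdot(w-x)>0$ since $D_2$ is open, and $w$ maximizes $a\cdot(z-x)$ over $\partial D_2$ because $a$ supports $D_2$ at $w$), so this works, at the cost of leaning on the earlier lemma instead of a one-line barrier.

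The genuine gap is at $\partial D_1$. After establishing $u\ge\phi$ in $\Omega_0$ with equality $u(x)=\phi(x)=1$, you ``expand $u$ and $\phi$ to first order along $\nu$ from $x$'' and assert $\nabla u(x)\parallel\nu$. This presupposes that $u$ is differentiable at the boundary point $x$ (equivalently that $\nabla u$ extends continuously to $x$), which is precisely the kind of boundary regularity the lemma is in the business of producing: Lemmas \ref{l:fislegthanu} and \ref{l:uisholder} are purely interior estimates, and the fact that $\partial D_1$ has a unique supporting hyperplane at $x$ does not by itself give a boundary limit of $\nabla u$. What the lemma asserts, and what must be bounded, is $\limsup_{z\to x,\ z\in D_2\setminus\overline{D_1}}|\nabla u(z)|$. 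The standard repair (and what the references behind Lemma \ref{lemma3.3} carry out) is quantitative rather than pointwise: from $u\ge\phi$ and a Lipschitz decay estimate for the barrier one gets $1-u(y)\le C(r_0,d_0,p,n,\alpha)\,|y-x|$ for $y\in\Omega_0$ near $x$, and then one applies the interior gradient estimate of Lemma \ref{l:uisholder} to the nonnegative function $1-u$ (which is $\tilde{\mathcal A}$-harmonic for $\tilde{\mathcal A}(\eta)=-\mathcal A(-\eta)\in M_p(\alpha)$) on balls $B\bigl(y,\tfrac12 d(y,\partial D_1)\bigr)$ to obtain $|\nabla u(y)|\le M$ for all interior $y$ near $x$, which yields the limsup bound without ever differentiating at the boundary. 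Relatedly, your bound on $|\nabla\phi|$ near $\partial B(z,r_0)$ — the step you flag as the technical obstacle — is itself a boundary gradient estimate of the same nature; invoking Lieberman-type boundary $C^{1,\beta}$ theory is legitimate but heavier than needed and sits outside the toolkit recorded in the paper. Since radial functions are in general not $\mathcal{A}$-harmonic, the annulus capacitary potential is not explicit; the cleaner route (used in \cite{LLN}, \cite{ALV}, \cite{AGHLV}) is to replace $\phi$ by an explicit radial $\mathcal{A}$-subsolution of the form $C\bigl(e^{-N|y-z|^2/r_0^2}-e^{-N(1+d_0/r_0)^2}\bigr)$ with $N$ large depending only on $p,n,\alpha$, whose Lipschitz constant is explicit in $r_0,d_0$; with that substitute both of the issues above disappear and the constant $M$ visibly depends only on $d_0,r_0,n$ and the structural data.
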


\begin{proof}
In view of \eqref{l:nabla u is subsolution}, it is enough to show that $|\nabla u|\leq M$ on $\partial D_1\cup \partial D_2$. 

We first take care of points on $\partial D_2$. Without loss of generality take $c_1=1, c_2=0$ as \eqref{AharmonicPDE} is invariant under translation and dilation. Let $x\in \partial D_2$ be fixed. By rotation, assume, by Lemma \ref{l:rotationissoln}, that $x_n=0$ is a supporting hyperplane to $\partial D_2$ at $x$ with $D_2\subset \{x_n>0\}$ and prove the present lemma for $\tilde{u}$ which is $\tilde{\mathcal{A}}$-harmonic for some $\tilde{\mathcal{A}}\in M_p(\alpha)$ and transfer the result back to $u$. Therefore, without loss of generality $x_n=0$ is a supporting hyperplane.  There exists a supporting hyperplane $x_n=d$ to $\partial D_1$ for which $D_1\subset \{x_n>d\}$. Let $\tilde{D}_2=D_2\cap \{ 0<x_n<d\}$ and let $\tilde{u}=x_n/d$. Basic comparison principle applied to positive weak solutions of $\mathcal{A}$-harmonic PDEs gives $u\leq \tilde{u}$ in $\tilde{D}_2$. This observation and the fact that $u(x)=\tilde{u}(x)$ implies
\[
|\nabla u(x)|\leq |\nabla\tilde{u}(x)|\leq \frac{1}{d}\leq\frac{1}{d_0}.
\]
This gives the desired results for points on $\partial D_2$. 

In order to show the same estimates for points on $\partial D_1$, we proceed as follows. We first construct a barrier as we did in the proof of Lemma \ref{lemma3.3} and then we prove that Lemma \ref{l:lemma2.2HSI} holds for $u_{\epsilon}$. Finally, using Lemmas \ref{l:fislegthanu} and \ref{l:uisholder} we conclude that Lemma \ref{l:lemma2.2HSI} holds for $u$ as well. 
\end{proof}

\subsection{A technique of Beurling}
In this subsection we give a brief introduction to a technique used by Beurling in \cite{Be} and in \cite{HS1} as well. To this end, recall that $K$ is a convex domain and let
\[
\mathcal{C}:=\{\Omega\, \, \mbox{convex bounded open subset of}\, \, \mathbb{R}^{n}\, \, \mbox{with}\, \, K\subset\Omega\}.
\]
Let $u_\Omega$ denote the $\mathcal{A}-$capacitary potential for $\Omega\setminus K$ whenever $\Omega\in \mathcal{C}$. Following  \cite{HS2}, we also define 
\[
\begin{array}{r}
\mathcal{G}:=\{\Omega\in\mathcal{C}:\,\, \liminf\limits_{y\to x, y\in\Omega} |\nabla u_{\Omega}(y)|\geq c\, \, \mbox{for all}\, \, x\in\partial\Omega\},\\
\mathcal{G}_0:=\{\Omega\in\mathcal{C}:\,\, \liminf\limits_{y\to x, y\in\Omega} |\nabla u_{\Omega}(y)|> c\, \, \mbox{for all}\, \, x\in\partial\Omega\},\\
\mathcal{B}:=\{\Omega\in\mathcal{C}:\,\, \limsup\limits_{y\to x, y\in\Omega} |\nabla u_{\Omega}(y)|\leq c\, \, \mbox{for all}\, \, x\in\partial\Omega\}.
\end{array}
\]
In the language of Beurling, $\mathcal{G}$ is the collection of ``subsolutions'' and $\mathcal{B}$ is the collection of ``supersolutions''. Our aim is to show that $\mathcal{G}\cap\mathcal{B}\neq\emptyset$. To this end, we will make some observations.

%%%%%%%

\begin{lemma}
\label{l:lemma-HS3.1}
$\mathcal{B}$ is closed under intersection. That is, if $\Omega_1,\Omega_2\in\mathcal{B}$ then $\Omega_1\cap\Omega_2\in\mathcal{B}$.
\end{lemma}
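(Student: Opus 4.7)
Set $\Omega := \Omega_1 \cap \Omega_2$. The intersection of two convex open sets is convex and open; it is bounded (as a subset of $\Omega_1$); and it contains $K$ since $K \subset \Omega_i$ for $i = 1,2$. Thus $\Omega \in \mathcal{C}$. Write $u := u_\Omega$ and $u_i := u_{\Omega_i}$ for the $\mathcal{A}$-capacitary potentials. The first step is the pointwise comparison
\[
u \leq u_i \qquad \text{in } \Omega \setminus K, \quad i = 1,2.
\]
This follows from the standard comparison principle for $\mathcal{A}$-harmonic functions, which is in turn a consequence of the strict monotonicity estimate \eqref{eqn1.1}: both $u$ and $u_i$ are $\mathcal{A}$-harmonic in $\Omega \setminus K$, they agree on $\partial K$, and on $\partial\Omega$ we have $u = 0 \leq u_i$.

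Now fix any $x \in \partial\Omega$. Because $\Omega_1$ and $\Omega_2$ are open, if $x$ were in the interior of both it would lie in the interior of $\Omega$, contradicting $x \in \partial\Omega$. Hence $x \in \partial\Omega_1 \cup \partial\Omega_2$; without loss of generality assume $x \in \partial\Omega_1$. The goal is to establish
\[
\limsup_{\substack{y \to x \\ y \in \Omega}} |\nabla u(y)| \leq c.
\]
The heuristic is clean: $u \leq u_1$ in $\Omega$ together with the common vanishing $u(x) = u_1(x) = 0$ forces the boundary gradient modulus of $u$ at $x$ to be dominated by that of $u_1$ at $x$, which is at most $c$ by the assumption $\Omega_1 \in \mathcal{B}$.

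To formalize this transfer of gradient bounds we follow the barrier strategy already employed in the proofs of Lemma \ref{l:lemma2.2HSI} and Lemma \ref{l:lemma2.3HSI}: approximate $u$ locally near $x$ by solutions $u^\epsilon$ of uniformly elliptic non-divergence linear equations whose ellipticity bounds are independent of $\epsilon$ (cf.\ \cite[section~2.3]{A}), use $u_1$ as a local upper barrier at $x$ (legitimate since $u \leq u_1$ and both vanish at $x$), pass $\epsilon \to 0$, and invoke the H\"older gradient estimate in Lemma \ref{l:uisholder} to translate the pointwise barrier bound into a bound on $|\nabla u|$ along any inward approach. After a rotation (permitted by Lemma \ref{l:rotationissoln}) one may align the supporting hyperplane of $\Omega_1$ at $x$ with $\{x_n = 0\}$, which simplifies both the barrier construction and the bookkeeping. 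The main obstacle is the corner case $x \in \partial\Omega_1 \cap \partial\Omega_2$, where $\partial\Omega$ is not $C^1$ and the notion of ``outer normal'' at $x$ is ambiguous; this is precisely the situation for which the non-smooth barrier arguments of \cite{Mi} (invoked already in the proof of Lemma \ref{l:lemma2.2HSI}) are designed. In that case one exploits the two supporting hyperplanes at $x$ simultaneously, observing that $\Omega$ is locally trapped inside the convex wedge they cut out, and uses the barrier $\min\{u_1, u_2\}$ (interpreted through the approximating linear problems) to obtain $\limsup_{y \to x, y \in \Omega} |\nabla u(y)| \leq \max\{|\nabla u_1(x)|, |\nabla u_2(x)|\} \leq c$, completing the proof.
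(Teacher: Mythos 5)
Your proposal is correct and takes essentially the same route as the paper: the comparison principle yields $u_{\Omega_1\cap\Omega_2}\leq\min\{u_{\Omega_1},u_{\Omega_2}\}$, a boundary point of the intersection is assumed without loss of generality to lie on $\partial\Omega_1$, and the common vanishing there transfers the bound $\limsup|\nabla u_{\Omega_1\cap\Omega_2}|\leq\limsup|\nabla u_{\Omega_1}|\leq c$, so $\Omega_1\cap\Omega_2\in\mathcal{B}$. The paper simply asserts this gradient transfer in one line, so your extra barrier and corner-point discussion is additional detail on the same argument rather than a different approach.
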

\begin{proof}
We will use the comparison principle for non-negative $\mathcal{A}$-harmonic functions. Let $u_{\Omega_i}$ for $i=1,2$ be $\mathcal{A}-$capacitary functions for $\Omega_i\in\mathcal{B}$. By the comparison principle, we have $u_{\Omega_1\cap \Omega_2}\leq \min\{u_{\Omega_1}, u_{\Omega_2}\}$ in $(\Omega_1\cap\Omega_2)\setminus K$. Furthermore, $\partial(\Omega_1\cap\Omega_2)\subset \partial\Omega_1\cap\partial\Omega_2$, hence given $x\in \partial(\Omega_1\cap\Omega_2)$ we can assume without loss of generality $x\in\partial\Omega_1$. Then $u_{\Omega_1}(x)=0=u_{\Omega_1\cap \Omega_2}(x)$ and thereupon one concludes that
\[
\limsup\limits_{y\to x, y\in\Omega_1\cap\Omega_2} |\nabla u_{\Omega_1\cap \Omega_2}(y)|\leq \limsup\limits_{y\to x, y\in\Omega_1} |\nabla u_{\Omega_1}|\leq c.
\] 
Therefore $\Omega_1\cap\Omega_2\in \mathcal{B}$. This finishes the proof of Lemma \ref{l:lemma-HS3.1}.
\end{proof}
Our next goal is to show the ``stability'' of $\mathcal{B}$. 
\begin{lemma}
\label{l:lemma-HS3.2}
Assume $K$ satisfies the interior ball property. Let $\Omega_1\supset \Omega_2\supset\ldots$ be a decreasing sequence of domains in $\mathcal{B}$. Let 
\[
\Omega=\mathring{\overline{\bigcap\limits_{n}\Omega_n}}.
\]
Assume $\Omega\in\mathcal{C}$. Then $\Omega\in \mathcal{B}$.
\end{lemma}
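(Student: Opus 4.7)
The plan is to pass to the limit in the monotone sequence $u_n := u_{\Omega_n}$ and verify each property defining $\mathcal{B}$ for the limit.

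\textbf{Monotonicity and uniform bounds.} The comparison principle for $\mathcal{A}$-harmonic functions, applied in $\Omega_{n+1}\setminus\overline{K}$ (where $u_n = u_{n+1} = 1$ on $\partial K$ and $u_n\geq 0 = u_{n+1}$ on $\partial\Omega_{n+1}$), yields $u_n\geq u_{n+1}$, and the same argument in $\Omega\setminus\overline{K}$ gives $u_n\geq u_\Omega$. Extending each $u_n$ by $0$ outside $\Omega_n$, the pointwise limit $u^{\ast}:=\lim_n u_n$ exists and $u^{\ast}\geq u_\Omega$. Since $\Omega$ is open and $\Omega\subset\Omega_n$, one has $\partial\Omega_n\subset\mathbb{R}^n\setminus\Omega$, so $d(\partial\Omega_n, K)\geq d(\mathbb{R}^n\setminus\Omega, K)=:d_0>0$ uniformly in $n$. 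Together with the interior ball property of $K$, Lemma~\ref{l:lemma2.3HSI} then provides a uniform Lipschitz bound $|\nabla u_n|\leq M$ on $\Omega_n\setminus K$ with $M=M(p,n,\alpha,\Lambda,r_0,d_0)$ independent of $n$.

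\textbf{Identification of the limit with $u_\Omega$.} Combining the uniform gradient bound with the interior $C^{1,\beta}$ estimate of Lemma~\ref{l:uisholder} and Arzel\`a-Ascoli, the monotone sequence converges in $C^{1,\beta}_{\mathrm{loc}}(\Omega\setminus\overline{K})$ to $u^{\ast}$, so $u^{\ast}$ is $\mathcal{A}$-harmonic in $\Omega\setminus\overline{K}$ and continuous up to $\partial K$ with $u^{\ast}\equiv 1$ there, by Lemma~\ref{l:fislegthanu}(c). The Lipschitz bound yields $u_n(y)\leq M\,d(y,\partial\Omega_n)$ on $\Omega_n\setminus K$; Hausdorff convergence of the decreasing convex sets $\Omega_n$ to $\overline{\Omega}$ implies $d(y,\partial\Omega_n)\to d(y,\partial\Omega)$ for each $y\in\Omega$, so $u^{\ast}(y)\leq M\,d(y,\partial\Omega)\to 0$ as $y\to\partial\Omega$. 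Uniqueness of the $\mathcal{A}$-harmonic Dirichlet problem then forces $u^{\ast}=u_\Omega$.

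\textbf{Boundary gradient condition.} It remains to verify $\limsup_{y\to x,\,y\in\Omega}|\nabla u_\Omega(y)|\leq c$ for every $x\in\partial\Omega$; this is the main obstacle, since each $\mathcal{B}$-condition on $\Omega_n$ is only a pointwise $\limsup$ at points of $\partial\Omega_n$ rather than a uniform neighborhood bound. My approach is a diagonal argument combined with the subsolution property of $|\nabla u|^2$. Given $x\in\partial\Omega$ and a sequence $y_k\to x$ with $y_k\in\Omega$, I would choose points $x_k'\in\partial\Omega_{n_k}$ with $x_k'\to x$ (using Hausdorff convergence $\Omega_n\searrow\overline{\Omega}$), then invoke the $\mathcal{B}$-condition at $x_k'$ to obtain a radius $r_k>0$ with $|\nabla u_{n_k}|\leq c+1/k$ on $B(x_k',r_k)\cap\Omega_{n_k}$; taking $n_k$ sufficiently large so that $|y_k-x_k'|<r_k$ and exploiting the $C^{1,\beta}_{\mathrm{loc}}$-convergence to compare $|\nabla u_\Omega(y_k)|$ with $|\nabla u_{n_k}(y_k)|$ gives $|\nabla u_\Omega(y_k)|\leq c+2/k$, and sending $k\to\infty$ yields the claim. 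The subtle point is that $r_k$ may depend on $n_k$, so the choice of $(n_k,r_k,y_k)$ must be made in a consistent order; if needed, this can be robustified by exploiting the subsolution property~\eqref{l:nabla u is subsolution} of $\varphi_n:=(|\nabla u_n|^2-(c+\epsilon)^2)^+$, which is a non-negative $\mathcal{L}_{u_n}$-subsolution vanishing in an open $\partial\Omega_n$-neighborhood, so that the weak maximum principle on $\Omega_n\cap\{d(\cdot,K)>\eta\}$ combined with $C^{1,\beta}_{\mathrm{loc}}$-convergence on $\{d(\cdot,K)=\eta\}\cap\overline{\Omega}$ transfers the control to $\varphi:=(|\nabla u_\Omega|^2-(c+\epsilon)^2)^+$ and, letting $\epsilon\downarrow 0$, completes the verification that $\Omega\in\mathcal{B}$.
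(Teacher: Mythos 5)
Your first two steps (monotone limit, the uniform bound $|\nabla u_n|\leq M$ from Lemma \ref{l:lemma2.3HSI}, identification of the limit with $u_\Omega$ via local $C^{1,\beta}$ convergence and the barrier $u_n\leq M\,d(\cdot,\partial\Omega_n)$ near $\partial\Omega$) are sound and agree with the paper's setup. The gap is in the third step, which is the heart of the lemma. The diagonal argument cannot be carried out in the order you describe: once $y_k\in\Omega$ is given, $|y_k-x_k'|\geq d(y_k,\partial\Omega_{n_k})\geq d(y_k,\partial\Omega)>0$ is bounded below by a quantity independent of $n_k$, whereas the radius $r_k$ supplied by the pointwise $\limsup$ condition at $x_k'\in\partial\Omega_{n_k}$ has no lower bound and does not improve when you enlarge $n_k$ (changing $n_k$ changes $x_k'$ and $r_k$ in an uncontrolled way). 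So ``taking $n_k$ sufficiently large so that $|y_k-x_k'|<r_k$'' is not justified; this is exactly the non-uniformity you flagged, and it is not removable within that scheme. The fallback does not repair it: the weak maximum principle applied to $\varphi_n=(|\nabla u_n|^2-(c+\epsilon)^2)^+$ on $\Omega_n\cap\{d(\cdot,K)>\eta\}$ (where $\varphi_n$ vanishes near $\partial\Omega_n$ by a covering argument) only yields $\sup\varphi_n\leq\sup_{\{d(\cdot,K)=\eta\}}\varphi_n$, and that right-hand side is of order $M^2$, not small: near $K$ the gradient typically exceeds $c$. Passing to the limit therefore gives a bound valid in the bulk but says nothing about $|\nabla u_\Omega|$ near $\partial\Omega$, so no boundary $\limsup$ estimate follows, and letting $\epsilon\downarrow 0$ does not help.

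What is missing is a majorant that itself vanishes on $\partial\Omega$ in the limit, and this is precisely the paper's device: compare the normalized subsolution $\frac{|\nabla u_k|^2-c^2}{M^2}-\frac1k$ (a subsolution of $\mathcal{L}_{u_k}$ by \eqref{l:nabla u is subsolution}) with the $\mathcal{L}_{u_k}$-\emph{solution} $\frac{u_k-\delta_k}{1-2\delta_k}$ on the ring $\{\delta_k<u_k<1-\delta_k\}$. The $\mathcal{B}$-condition for $\Omega_k$, upgraded by compactness of $\partial\Omega_k$ to a bound on a small neighborhood, lets you choose $\delta_k\to0$ so that the subsolution is $\leq 0$ on $\{u_k=\delta_k\}$, while the uniform bound $M$ makes it $\leq 1$ on $\{u_k=1-\delta_k\}$; the comparison principle then gives
\begin{equation*}
\frac{|\nabla u_k|^2-c^2}{M^2}-\frac1k\;\leq\;\frac{u_k-\delta_k}{1-2\delta_k}\quad\text{in }\{\delta_k<u_k<1-\delta_k\},
\end{equation*}
and letting $k\to\infty$ yields $\frac{|\nabla u_\Omega|^2-c^2}{M^2}\leq u_\Omega$ near $\partial\Omega$, which tends to $0$ at $\partial\Omega$ and gives $\limsup_{y\to x}|\nabla u_\Omega(y)|\leq c$ for every $x\in\partial\Omega$. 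In short: replace the constant majorant on $\{d(\cdot,K)=\eta\}$ by the solution $u_k$ itself, normalized between its level sets; without that (or an equivalent barrier vanishing on $\partial\Omega$), your verification of the boundary condition does not go through.
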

\begin{proof}
Let $\Omega_1\supset \Omega_2\supset\ldots$ be a sequence of domains in $\mathcal{B}$ and let $\{u_k\}$ be a sequence of capacitary $\mathcal{A}-$harmonic functions for $\{\Omega_k\}$ respectively. Then $0\leq u_k\leq 1$. Moreover, by Lemmas \ref{l:fislegthanu}, \ref{l:uisholder}, and \ref{l:lemma2.3HSI} it follows that $\{u_k,\nabla u_k\}$ converges uniformly on compact subsets of $\Omega\setminus\overline{K}$ to  $\{u,\nabla u\}$ where $u$ is a  $\mathcal{A}-$harmonic function in $\Omega\setminus K$.  The proof that $u$ is indeed the capacitary $\mathcal{A}-$harmonic function for $\Omega\setminus K$ essentially follows from the convergence of $\Omega_n$ to $\Omega$ in the Hausdorff distance sense and Lemma \ref{lemma3.3}. 

We next show that $\Omega\in \mathcal{B}$. To this end, let $M=\max_{k}(\sup|\nabla u_k|)<\infty$, by Lemma \ref{l:lemma2.3HSI}. Let $0<\delta_k$  be such that $\delta_k\to 0$ as $k\to\infty$ and
\[
\frac{|\nabla u_k|^{2}-c^2}{M^2}-\frac{1}{k} \leq 0 \quad \mbox{on} \quad \{u_k=\delta_k\}.
\]
Consider 
\[
\frac{u_k-\delta_k}{1-2\delta_k},
\]
%{\color{red}(did we need $(u_k-\delta_k)_+$, as in [HS2]?)} 
which is non-negative in $\{u_k>\delta_k\}\setminus \{u_k<1-\delta_k\}$ and has zero boundary values on  $\{u_k=\delta_k\}$. Recall definition \eqref{mathcalL}. By (\ref{l:nabla u is subsolution}) applied to
$\frac{|\nabla u_k|^{2}-c^2}{M^2}-\frac{1}{k}$,
\[
\mathcal{L}_{u_k}\left(\frac{|\nabla u_k|^{2}-c^2}{M^2}-\frac{1}{k}\right)\geq 0 = \mathcal{L}_{u_{k}}\left(\frac{u_k-\delta_k}{1-2\delta_k}\right)
\]
in $\{u_k>\delta_k\}\setminus \{u_k<1-\delta_k\}$. On the other hand, on $ \{u_k=\delta_k\}$ we have
\[
\frac{|\nabla u_k|^{2}-c^2}{M^2}-\frac{1}{k} \leq 0=\frac{u_k-\delta_k}{1-2\delta_k}.
\] Furthermore, on $\{u_k=1-\delta_k\}\cap\{u_k\ge\delta_k\}$ we have
\[
\frac{|\nabla u_k|^{2}-c^2}{M^2}-\frac{1}{k} \leq 1=\frac{u_k-\delta_k}{1-2\delta_k}.
\]
It follows that
\begin{equation}\label{bound}
\frac{|\nabla u_k|^{2}-c^2}{M^2}-\frac{1}{k} \leq \frac{u_k-\delta_k}{1-2\delta_k} \quad\mbox{in} \quad \{u_k>\delta_k\}\setminus \{u_k<1-\delta_k\}.
\end{equation}
Given $\epsilon>0$ one can find a neighborhood $U_\epsilon$ of $\partial\Omega$ such that 
\[
u_k\leq \epsilon \quad \mbox{in}\quad U_\epsilon.
\]
Letting $k\to\infty$ and using \eqref{bound} we obtain
\[
\frac{|\nabla u_k|^{2}-c^2}{M^2}-\frac{1}{k} \to \frac{|\nabla u|^{2}-c^2}{M^2}\leq u\leq \epsilon \quad \mbox{as} \quad k\to\infty
\]
uniformly on compact subsets of $U_\epsilon\cap\Omega$. By letting $\epsilon\to 0$ we conclude the proof of the Lemma. 
\end{proof}
As a consequence of Lemma \ref{l:lemma-HS3.2}, we claim that if $\mathcal{G}_0$ is not empty and for $\Omega_0\in \mathcal{G}_0$ the set $\{\tilde{\Omega}\in\mathcal{B};\, \, \overline{\Omega_0}\subset \tilde{\Omega}\}$ is not empty, then there exists a domain $\Omega\in\{\tilde{\Omega}\in\mathcal{B};\, \, \overline{\Omega_0}\subset \tilde{\Omega}\}$ with the property that 
\begin{align}
\label{minimal}
\mbox{if $\hat{\Omega}\in \{\tilde{\Omega}\in\mathcal{B};\, \, \overline{\Omega_0}\subset \tilde{\Omega}\}$ and $\hat{\Omega}\subset \Omega$, then $\hat{\Omega}=\Omega$.}
\end{align} 

Such a domain $\Omega$ will be called \emph{minimal element} in $\{\tilde{\Omega}\in\mathcal{B};\, \, \overline{\Omega_0}\subset \tilde{\Omega}\}$. For simplicity, define 
\[
\mathcal{C}_0:=\{\tilde{\Omega}\in\mathcal{B};\, \, \overline{\Omega_0}\subset \tilde{\Omega}\}.
\]
To prove our claim, let 
\[
I=\bigcap_{i} \tilde{\Omega}_i \quad \mbox{where} \quad \tilde{\Omega}_i\in\mathcal{C}_0.
\]
%{\color{red} (maybe this is stupid, why can we write this in a countable way?)}
Write $I=\cap_{i=1}^{\infty}\tilde{\Omega}_i$, with $\tilde{\Omega}_i\in\mathcal{C}_0$. Let $\Omega_1=\tilde{\Omega}_1$ and $\Omega_{k+1}=\tilde{\Omega}_{k+1}\cap\Omega_k$ for $k=2,3,\ldots$. Then each $\Omega_k$ is convex and $\Omega_k\in\mathcal{B}$ by Lemma \ref{l:lemma-HS3.1}. Applying Lemma \ref{l:lemma-HS3.2} to $\{\Omega_n\}$ 
we conclude that
\[
\Omega=\mathring{\overline{\bigcap\limits_{n}\Omega_n}}\in \mathcal{B}.
\]
This finishes the proof of our claim. 
%{\color{red}(I'm blind! Why if $\hat{\Omega}\in \{\tilde{\Omega}\in\mathcal{B};\, \, \overline{\Omega_0}\subset \tilde{\Omega}\}$ and $\hat{\Omega}\subset \Omega$, then $\hat{\Omega}=\Omega$?)}

We proceed by studying the behavior of capacitary $\mathcal{A}$-harmonic functions on extremal points of $\Omega$.  To set the stage, let $\Omega$ be the minimal element in $\mathcal{C}_0$. A point $x\in\partial\Omega$ is called \textit{extremal point} if there exists a supporting hyperplane to $\Omega$ touching $\partial\Omega$ at $x$ only. Let $E_\Omega$ denote the set of extremal points of $\Omega$. 

\begin{lemma}
\label{l:lemma-HS3.4}
Let $\Omega$ be a minimal element in the class $\mathcal{C}_0$ and let $x\in\overline{E_{\Omega}}$. Then
\[
\limsup\limits_{\substack{y\to x\\ y\in\Omega}} |\nabla u_{\Omega}(y)|=c.
\]
\end{lemma}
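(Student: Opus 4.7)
The plan is a proof by contradiction driven by the minimality of $\Omega$ in $\mathcal{C}_0$. Since $\Omega\in\mathcal{B}$ we already have $\phi(z):=\limsup_{y\to z,\,y\in\Omega}|\nabla u_\Omega(y)|\le c$ on all of $\partial\Omega$, so only the reverse inequality requires work. Suppose for contradiction that $\phi(\bar x)<c$ for some $\bar x\in\overline{E_\Omega}$. Then I would slice off a thin convex cap of $\Omega$ near $\bar x$ with a hyperplane parallel to a supporting hyperplane at $\bar x$, and show that the resulting smaller convex domain still lies in $\mathcal{C}_0$, contradicting minimality.

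First I would reduce to a genuine extremal point. Because $|\nabla u_\Omega|$ is continuous in $\Omega$ (\Lemma{uisholder}) and globally bounded (\Lemma{lemma2.3HSI}), the boundary function $\phi$ is upper semicontinuous on $\partial\Omega$, so $\phi<c$ on a relative neighborhood $V$ of $\bar x$ in $\partial\Omega$. Since $\bar x\in\overline{E_\Omega}$, $V$ meets $E_\Omega$, giving a true extremal point $x\in E_\Omega$ with $\phi(x)<c-2\epsilon$ for some $\epsilon>0$. Pick a supporting hyperplane $P$ that meets $\overline\Omega$ only at $x$; by \Lemma{rotationissoln} (absorbing the rigid motion into $\mathcal{A}\in M_p(\alpha)$) I may arrange $x=0$, $P=\{y_n=0\}$, $\Omega\subset\{y_n<0\}$, and pick $\rho>0$ with $|\nabla u_\Omega|\le c-\epsilon$ on $\Omega\cap B(x,2\rho)$. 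Since $P\cap\overline\Omega=\{x\}$, compactness of $\overline\Omega\setminus B(x,\rho)$ yields $\delta_0>0$ with $\overline\Omega\cap\{y_n>-\delta_0\}\subset B(x,\rho)$. Then for an integer $L\ge 2c/\epsilon$ and any $\delta\in(0,\min\{\delta_0/L,\,d(\overline{\Omega_0},\partial\Omega)\})$, the set $\Omega_\delta:=\Omega\cap\{y_n<-\delta\}$ is a bounded open convex subset of $\Omega$ with $\overline{\Omega_0}\subset\Omega_\delta\subsetneq\Omega$.

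The core step is to verify $\Omega_\delta\in\mathcal{B}$. Writing $u_\delta:=u_{\Omega_\delta}$, comparing boundary data on $\partial(\Omega_\delta\setminus K)$ ($u_\delta=u_\Omega=1$ on $\partial K$; both vanish on $\partial\Omega\cap\partial\Omega_\delta$; $u_\delta=0\le u_\Omega$ on the new flat face $T_\delta:=\Omega\cap\{y_n=-\delta\}$) and invoking the comparison principle for $\mathcal{A}$-harmonic functions gives $u_\delta\le u_\Omega$ in $\Omega_\delta\setminus K$; on $\partial\Omega\cap\partial\Omega_\delta$ this at once yields $\phi_{\Omega_\delta}\le\phi_\Omega\le c$. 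For the new face $T_\delta$ I would run a slab comparison against the affine (hence $\mathcal{A}$-harmonic) barrier
\[
w(y):=-(c-\epsilon/2)(y_n+\delta),
\]
in $S:=\Omega_\delta\cap\{y_n>-L\delta\}$. Since $L\delta<\delta_0$, every vertical segment inside $S$ lies in $\Omega\cap B(x,\rho)$, so integrating $|\nabla u_\Omega|\le c-\epsilon$ in the $y_n$-direction yields $u_\Omega\le(c-\epsilon)L\delta$ on $\{y_n=-L\delta\}\cap\overline\Omega$, while $w=(c-\epsilon/2)(L-1)\delta$ there; the choice $L\ge 2c/\epsilon$ gives $w\ge u_\Omega\ge u_\delta$ on that face. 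On $\partial\Omega\cap S$ one has $w\ge 0=u_\delta$, and on $T_\delta$ one has $w=0=u_\delta$. Comparison in $S$ forces $u_\delta\le w$, and since $u_\delta=w=0$ on the flat piece $T_\delta$ with $u_\delta\le w$ on the interior side, a Hopf-type reading together with the vanishing tangential derivatives of $u_\delta$ on $T_\delta$ gives $|\nabla u_\delta|\le|\nabla w|=c-\epsilon/2<c$ on $T_\delta$. This certifies $\Omega_\delta\in\mathcal{B}\cap\mathcal{C}_0$ with $\Omega_\delta\subsetneq\Omega$, contradicting minimality and finishing the argument.

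The main obstacle is the slab step: besides the quantitative tuning $L\ge 2c/\epsilon$, one must ensure the integration of the gradient bound is done over segments truly contained in the ball where the estimate is valid (this is what pins down the range of $\delta$ and $L$ relative to $\delta_0$ and $\rho$), and one must apply a comparison principle for $\mathcal{A}$-harmonic functions that handles continuous-up-to-the-boundary data for $u_\delta$ on $\partial\Omega_\delta$—this relies on the regularity provided by \Lemma{fislegthanu}, \Lemma{uisholder} and the global gradient bound in \Lemma{lemma2.3HSI}, together with standard weak comparison for \eqref{AharmonicPDE}.
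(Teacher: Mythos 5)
Your proposal is correct and is essentially the paper's own argument: both proofs proceed by contradiction at a (nearby genuine) extremal point, slice off a thin cap of $\Omega$ with a hyperplane parallel to the supporting hyperplane, exploit the smallness of $|\nabla u_\Omega|$ in the cap together with a linear barrier in a slab to show the truncated convex domain still lies in $\mathcal{B}$ (hence in $\mathcal{C}_0$), and then contradict minimality via \eqref{minimal}. The only noteworthy differences are cosmetic or minor: the paper implements the barrier step by adding the affine function to $u_\epsilon$ and invoking the maximum principle for the linearized operator $\mathcal{L}_{u_\epsilon}$, whereas you compare $u_\delta$ directly with the affine $\mathcal{A}$-harmonic function $w$ (arguably cleaner), and the paper additionally disposes of the non-$C^{1}$ points of $\partial\Omega\cap\partial\Omega_\delta$ (in particular the edge created by the cutting plane) through a separate barrier argument showing the gradient limit vanishes there --- a point you should add when checking the $\mathcal{B}$-condition at every boundary point of $\Omega_\delta$.
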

\begin{proof}
The proof will be a contradiction argument. To this end, suppose there exists $y_0\in \overline{E_\Omega}$ with
\[
\limsup\limits_{\substack{y\to y_0\\ y\in\Omega}} |\nabla u_{\Omega}(y_0)|=c(1-4\tilde{\alpha}).
\]
for some $\tilde{\alpha}>0$. By the H{\"o}lder continuity of $\nabla u_\Omega$ there exists a neighborhood $\mathcal{N}$ of $\partial\Omega$ with $y_0\in \mathcal{N}$ satisfying that
\begin{align}
\label{e:3.5}
|\nabla u_\Omega(x)|\leq c(1-\tilde{\alpha})\, \, \mbox{for every}\, \, x\in \mathcal{N}\cap\Omega.
\end{align}
Assume that $y_0\in E_\Omega$. Otherwise, we may choose a sequence in $E_{\Omega}$ converging to $y_0$ with the above property. 
%%
%\begin{figure}[!ht]
%\begin{tikzpicture}
%\usetikzlibrary{patterns}

%  \node at (-1,0) (a) {};
 % \node at (1,0) (b) {};
  %\node at (0,1) (c) {};

%\begin{scope}

%\clip \convexpath{a,c,b}{3cm} node[below] {$\Omega$};
%\draw[fill=gray] (-4,0) circle (1.6cm);
%\draw[pattern=north west lines, pattern color=blue] (-3.6,2)--(-3.6,-2)--(-5,-2)--(-5,2)--(-3.6,2);

%\end{scope}
%\draw[thick,blue, opacity=.5, fill=gray] \convexpath{a,c,b}{.5cm} node {$K$};
%\draw[red, name path=line1] \convexpath{a,c,b}{3cm} node[below] {$\Omega$};
%\node at (-2.3,0) {$\mathcal{N}$};
%\draw[dashed, red, name path=line2] (-3.6,2)--(-3.6,0)--(-3.6,-2) node[left] {$P_\epsilon$};

%\draw [name intersections={of=line1 and line2, total=\t}] (intersection-1)--(intersection-2);

%\draw (-4, 0) circle (1pt) node[left] {$y_0$};
%\draw[name path=line2, ->] (0.31622, 0.9486)--(0.9486, 2.8460) node[midway, left] {$a$};
%\draw [name intersections={of=line1 and line2, by=x}] (x) circle (1pt)node[right] {$y_{x}$};
%\node at (2,1) {$D'_2$};
%\begin{scope}
%\draw[red] (-2.8822, 2.0395)--(6.6045, -1.193) node[below] {$P$};

%\end{scope}
%\end{tikzpicture}
%\end{figure}
%% 
Let $d>0$ and let $P_d$ be a plane such that $d(y_0, P_d)=d$ with $P_d \cap \Omega \subset\mathcal{N}$. Notice that without loss of generality, by Lemma \ref{l:rotationissoln} we may assume that $y_0=0$, $P_d =\{x_n=d\}$. Otherwise, we rotate our coordinate system, work with $\hat{u}$, which is $\hat{\mathcal{A}}$-harmonic for some $\hat{\mathcal{A}}\in M_p(\alpha)$, and at the end transfer everything back to $u_{\Omega}$. 

Hence assume $y_0=0$, $P_d =\{x_n=d\}$, let $\epsilon>0$ and define $\Omega_\epsilon=\Omega\setminus \{x_n\le\epsilon\}$. Assume $\epsilon$ is small enough so that $\Omega_0\subset \Omega_{\epsilon}$. Let $u_\epsilon$ be the $\mathcal{A}$-capacitary function for $\Omega_\epsilon\setminus K$. As $u_\epsilon\leq u_\Omega$ on $\partial \Omega_\epsilon$, by the comparison principle for non-negative $\mathcal{A}$-harmonic functions we have 
\begin{align}
\label{e:3.6}
0\leq u_\epsilon\leq u_\Omega\, \, \mbox{in}\, \, \Omega_\epsilon.
\end{align}
It follows that we have
\begin{equation}\label{nicepart}
\limsup |\nabla u_{\epsilon}|\leq \limsup|\nabla u_\Omega|\leq c \, \, \mbox{on}\, \, \partial\Omega\cap\partial\Omega_\epsilon.
\end{equation}
At the points where $\partial\Omega\cap\partial\Omega_\epsilon$ is not $C^{1}$ we claim that $|\nabla u_{\epsilon}|=0$. This can be done 
as in \cite[Section 7]{AGHLV} by considering $\mathcal{A}(\eta,\delta)$ to obtain a uniformly elliptic equation in divergence form and $v^{\delta}_\epsilon$ which is $\mathcal{A}(\eta,\delta)$-harmonic in $\Omega_\epsilon$. Once again repeating following \cite[Section 7]{AGHLV}, one concludes that $|\nabla v_{\epsilon}^{\delta}|=0$ and thereupon letting $\delta\to 0$ we obtain our claim. 

Using \eqn{3.5} and \eqn{3.6} we have 
\begin{align}
\label{e:limsupnablauepsilon}
\max\limits_{P_d\cap\Omega} u_\epsilon\leq \max\limits_{P_d\cap\Omega} u_\Omega\leq d \sup\limits_{\{0\leq x_n\leq d\}\cap\Omega} |\nabla u_\Omega|\leq d\, c(1-\tilde{\alpha})
\end{align}
Let now
\[
v:=u_\epsilon+\frac{d\,c(1-\tilde{\alpha})}{d-\epsilon}(d-x_n)
\]
and note that
\[
\mathcal{L}_{u_\epsilon}v=\mathcal{L}_{u_\epsilon}u_\epsilon=0\, \, \mbox{in}\, \, \Omega_\epsilon\cap\{x_n<d\}.
\]
Thereupon we conclude that $v$ takes its maximum on the boundary of $\Omega_{\epsilon}\cap\{x_n<d\}$. Moreover, using \eqref{e:limsupnablauepsilon} we obtain
\[
v\leq c(1-\tilde{\alpha})d \, \, \mbox{on}\, \, \partial(\Omega_\epsilon\cap\{ x_n<d\})\, \, \mbox{and}\, \,  v=d\, c(1-\tilde{\alpha})\, \, \mbox{on}\, \, P_\epsilon,
\]
as 
\[
\partial(\Omega_\epsilon\cap\{ x_n<d\})\subset P_d\cup P_\epsilon\cup (\partial\Omega\cap\{\epsilon<x_n<d\})
\]
Hence we have
\[
0\geq \frac{\partial v}{\partial x_n}=|\nabla u_\epsilon|-\frac{c(1-\tilde{\alpha})d}{d-\epsilon}\, \, \mbox{on}\, \,  P_\epsilon. 
\]
By choosing $\epsilon\leq \tilde{\alpha} d$, we obtain $|\nabla u_\epsilon|\leq c$ on $P_\epsilon$. In view of this result and \eqref{nicepart} we have $\Omega_\epsilon\in \mathcal{B}$ and by construction $\Omega_\epsilon\subset\Omega$. By \eqref{minimal} we conclude that $\Omega=\Omega_\epsilon$ which is a contradiction, hence the proof of Lemma \ref{l:lemma-HS3.4} is complete.
%{\color{red}(Why $\Omega_\epsilon}\supset\Omega_0$? Could it happen that $\Omega$ and $\Omega_0$ share boundary points?)}
\end{proof}

We next observe that if $\Omega$ is a minimal element in the class $\mathcal{C}_0$ and let $u_\Omega$ be the $\mathcal{A}$-capacitary function for $\Omega\setminus K$ then 
\begin{align}
\label{e:lemma-3.5}
|\nabla u_\Omega(x)|\geq c\, \, \mbox{for all}\, \, x\in\Omega\setminus K.
\end{align}
To prove \eqref{e:lemma-3.5} we use the fact that for every $0<t<1$, $\{x\in\Omega : u_{\Omega}>t\}$ is a convex  set due to Lemma \ref{lemma3.4}. The conclusion follows by applying Lemma \ref{l:lemma2.2HSI} to $\{x\in \Omega: u_{\Omega}>t\}$ and $\Omega$, and using Lemma \ref{l:lemma-HS3.4}. 

\subsection{Final Proof of Theorem \ref{t:HS1}}
We split the proof into two steps, existence of $\Omega$ and uniqueness of $\Omega$. 

\subsubsection{Existence of $\Omega$:} In order to prove Theorem \ref{t:HS1} we show that there exist domains $\Omega_0$ and $\Omega_1$ such that $\overline{\Omega_0}\in \mathcal{G}_0$ and $\Omega_1\in \mathcal{B}$ with $\Omega_0\subset\Omega_1$. Then from \eqref{minimal} there exists a minimal element $\Omega\in\mathcal{C}_0$ and by using \eqref{e:lemma-3.5} we have $\Omega\in \mathcal{G}$. In view of the definitions of $\mathcal{G}$ and $\mathcal{B}$, this would lead to the existence portion of Theorem \ref{t:HS1} is true. Hence to finish the proof of existence, it remains to show the existence of $\Omega_0\in \mathcal{G}_0$ and $\Omega_1\in\mathcal{B}$ with $\overline{\Omega_0}\subset \Omega_1$. \\

\noindent {\bf Existence of $\Omega_1\in \mathcal{B}$:} For this, choose $R_0$ large enough so that $K\subset B_{R_0}$. Let $R>R_0$ large to be fixed below. Without loss of generality assume $0\in K$. Let $u_R$ be the $\mathcal{A}$-capacitary function for $B(0,R)\setminus K$. Using (b) in Lemma \ref{lemma3.3} we can choose $R$ sufficiently large so that
\[
|\nabla u_R(x)|\leq c_{\star} |x|^{\frac{1-n}{p-1}}=c_{\star} R^{\frac{1-n}{p-1}} \leq c.
\]
Therefore, $\Omega_1=B(0,R)\in \mathcal{B}$. \\

\noindent {\bf Existence of $\Omega_0\in\mathcal{G}_0$:} 
Let $R>0$ be as above and $u_R$ be the capacitary function for $B(0,R)\setminus K$. We first observe from the smoothness of $B(0,R)$ and \Lemma{lemma2.2HSI} that there is a constant $C>0$ and a neighbourhood  $U$ of $\partial K$ such that 
\[
|\nabla u_R|\geq C\, \, \mbox{in}\, \, U\setminus K.
\]
For a given $t$, $0<t<1$, let $\Omega_t=\{x\in B(0,R)\ : \, \, u_R(x)>1-t\}$. Then the capacitary function for $\Omega_t$ is 
\[
u_{\Omega_t}(x)=\frac{u_R-(1-t)}{t}.
\]
By choosing $t$ sufficiently small we have on $\partial\Omega_t$
\[
|\nabla u_{\Omega_t}|=\frac{|\nabla u_R|}{t}\geq \frac{C}{t}\geq c
\]
Therefore, $\Omega_0:=\Omega_t\in \mathcal{G}_0$ and $\overline{\Omega_0}\subset\Omega_1$

In view of these two observations and our earlier remarks, the existence of $\Omega$ is done. 

\subsubsection{Uniqueness of $\Omega$}
This will follow from \cite{HS}, where uniqueness was shown for the Laplace equation, and nonlinear elliptic differential equations satisfying properties (i)-(iv) given below, by using the Lavrent{\`e}v principle. In order to make use of this result for nonlinear elliptic equations, one needs to have four conditions (see section 4 in \cite{HS});
\begin{align}
\label{e:HScond}
\begin{array}{ll}
(i) & \mbox{The PDE is weakly elliptic and satisfies the comparison principle.}\\
(ii) & \mbox{If $u$ is a solution, then rotations and translations are also solutions to some}\\
& \quad \mbox{weakly elliptic PDE satisfying comparison principle.}\\
(iii) & \mbox{$u=x_n$ is a solution.}\\
(iv) & \mbox{If $\Omega$ and $K$ are both convex and  if $u_\Omega$ is the $\mathcal{A}-$capacitary function for}\\
& \mbox{$\Omega\setminus K$, then levels of $u$ are convex; 
$\Omega_t=\{x\in\Omega; \, \, u(x)=t\}$ is convex.}
\end{array}
\end{align}
Here (i) in \eqref{e:HScond} follows from the structural assumption on $\mathcal{A}$, (ii) follows from Lemma \ref{l:rotationissoln}. Regarding (iii), it is clear that $u=x_n$ is $\mathcal{A}$-harmonic, and (iv) follows from Lemma \ref{lemma3.4}. 

\subsubsection{Proof of $\Omega\in C^{1,\gamma}$.}
To obtain the $C^{1,\gamma}$ regularity of $\Omega$, one repeats the arguments of Vogel \cite{V}, which rely on the machinery of \cite{AC} and \cite{ACF}.

Furthermore, it follows from applying the Hodograph transform that if $\mathcal{A}\in C^{\infty}(\R^n\setminus \{0\})$, then $\partial\Omega\in C^{\infty}$, see \cite{KN77,KNS78}. We notice that an interesting alternative method to obtain higher regularity has recently been done in
\cite {DSS0}, where the authors prove higher order boundary Harnack estimates. See also \cite{DSS,DSS2} in the context of thin obstacle problems.

Now the proof of Theorem \ref{t:HS1} is complete.

\bibliographystyle{plain}

\bibliography{myref}  
\end{document}